\newcommand{\shiftdown}[1]{\smash{\raisebox{-.5\normalbaselineskip}{#1}}}
\newcolumntype{C}{>{\collectcell\shiftdown}c<{\endcollectcell}}
\DeclareSymbolFont{extraup}{U}{zavm}{m}{n}
\DeclareMathSymbol{\varheart}{\mathalpha}{extraup}{86}
\DeclareMathSymbol{\vardiamond}{\mathalpha}{extraup}{87}
\DeclareMathOperator\C{\mathbb C}
\DeclareMathOperator\N{\mathbb N}
\DeclareMathOperator{\Graph}{Graph}
\DeclareMathOperator\h{\hbar}
\DeclareMathOperator\Hom{Hom}
\DeclareMathOperator\End{End}
\DeclareMathOperator\Fl{\text{Fl}}
\DeclareMathOperator\GL{\text{GL}}
\DeclareMathOperator\Mat{\text{Mat}}
\DeclareMathOperator\Or{\mathcal{O}}
\DeclareMathOperator\MM{\mathbb{M}}
\DeclareMathOperator\NN{\mathbb{N}}
\def\r{r}
\def\c{c}
\DeclareMathOperator\BCT{BCT}
\DeclareMathOperator{\Pe}{\mathbb{P}}
\def\leqb{\leq_B}
\def\leqsb{\leq_{\hat{B}}}
\def\geqsb{\geq_{\hat{B}}}
\def\leqgb{\leq_{G}}
\newcommand\bs{{\color{blue} \char`\\}}
\newcommand\fs{{\color{red}/}}
\def\ttt#1{\text{\tt #1}}
\DeclareMathOperator\Zb{\mathcal Z}
\newcommand{\Ab}{\mathcal A}
\newcommand{\Xb}{\mathcal X}
\DeclareMathOperator\At{\mathbb A}
\DeclareMathOperator\Tt{\mathbb T}
\def\NS5{N\!S5}
\def\Ch{X}
\def\DD{\mathcal D}
\def\ch{ch}
\newtheorem{fact}{Fact}[section]
\newtheorem{lemma}[fact]{Lemma}
\newtheorem{theorem}[fact]{Theorem}
\newtheorem{thmx}{Theorem}
\newtheorem{definition}[fact]{Definition}
\newtheorem{example}[fact]{Example}
\newtheorem{ccounterexample}[fact]{Counterexample}
\newtheorem{rremark}[fact]{Remark}
\newenvironment{remark}{\begin{rremark} \rm}{\end{rremark}}
\newenvironment{counterexample}{\begin{ccounterexample} \rm}{\end{ccounterexample}}
\newtheorem{corollary}[fact]{Corollary}
\DeclareMathOperator\Attr{Att}
\newcommand{\Att}[1]{\Attr_{\mathfrak{#1}}}
\def\Cs{\C^{\times}}
\DeclareMathOperator\TsFl{T^*Fl}
\DeclareMathOperator\TsP{T^*\Pe}
\DeclareMathOperator\TsGr{T^*Gr}
\def\X{X}
\title[]{Geometric Bruhat order on (0,1)-matrices} 
\author{T. M. Botta$^\diamond$, A. Foster$^\star$, R. Rim\'anyi$^\star$}
\email{tommaso.botta@math.ethz.ch}
\email{aofoster@live.unc.edu}
\email{rimanyi@email.unc.edu}
\address{$^\diamond$ Department of Mathematics, ETH Z\"urich, Switzerland \\
$^\star$ Department of Mathematics, University of North Carolina at Chapel Hill, USA}
\begin{document}

\maketitle

\begin{abstract} 
The combinatorially and the geometrically defined partial orders on the set of permutations coincide. We extend this result to $(0,1)$-matrices with fixed row and column sums. Namely, the Bruhat order induced by the geometry of a Cherkis bow variety of type A coincides with one of the two combinatorially defined Bruhat orders on the same set. 
\end{abstract}

\section{Introduction}

Let $\BCT(r,c)$ denote the set of 01 matrices with row sum vector $r$ and column sum vector $c$---BCT stands for binary contingency table. Recent advances in enumerative geometry identified a class of smooth varieties $\X(r,c)$, called Cherkis bow varieties of type A (``bow varieties''), that are parameterized by pairs of vectors $r,c$. Bow varieties are endowed with a torus action, and the torus fixed points of $\X(r,c)$ are naturally identified with $\BCT(r,c)$. Bow varieties are very general (eg. they contain Nakajima quiver varieties of type A as special cases), they have significance in physics \cite{BFN}, and they have favorable properties for enumerative geometry \cite{RimanyiShou, BottaRimanyi} (eg. bow varieties are closed for ``combinatorial 3d mirror symmetry'').  

For $r=c=(1,1,\ldots,1)\in \N^n$ the bow variety is the total space of the cotangent bundle over the full flag variety $\Fl(n)$, and the torus action is the natural $(\Cs)^n$ action. The fixed point set is $\BCT((1,\ldots,1),(1,\ldots,1))$ which is the set of $n\times n$ permutation matrices. More generally, if only $c=(1,1,\ldots,1)$ then the bow variety is the total space of the cotangent bundle of a partial flag variety on $\C^n$, with its natural $(\Cs)^n$ action. The fixed point set is $\BCT(r,(1,\ldots,1))$ which is indeed in obvious bijection with $S_n/(\times_i S_{r_i})$.

In these Schubert Calculus examples one can use geometry to define a partial order, called {\em geometric Bruhat order}, on the set of fixed points. Namely, after fixing a reference flag the variety is partitioned into Schubert cells, with each Schubert cell containing one torus fixed point (we can forget the cotangent directions for now). The ``containment in the closure'' relation then defines a partial order on the set of Schubert cells, and hence also on the set of fixed points. 

It turns out that there is an analogous partial order on the torus fixed points of general bow varieties $\X(r,c)$. The role of the reference flag is played by a general cocharacter of the torus. The role of Schubert cells is played by attracting sets with respect to this cocharacter. In fact, one needs to work with iterated attracting sets (a.k.a. full attracting sets). The ``containment in the closure'' part of the definition is also modified: we consider which fixed points belong to the closure of which attracting sets. Nevertheless, there is a notion of geometric Bruhat order on $\BCT(r,c)$ for general $r,c$ vectors that generalizes the notion of geometric Bruhat order in Schubert Calculus. This Bruhat order plays an important role in the theory of Stable Envelopes developed by Okounkov and his coauthors \cite{maulik2012quantum, Okounkov_lectures, aganagic2016elliptic}.

\smallskip

Our paper is about finding a combinatorial characterization of the geometric Bruhat order on $\BCT(r,c)$. In fact, combinatorists already studied ``natural'' partial orders on 01 matrices with prescribed row and column sums \cite{BrualdiBook}. The surprising outcome of their studies is that there are two different combinatorially natural partial orders, that they named Bruhat order and secondary Bruhat order. We will call them the {\em combinatorial Bruhat order} and the {\em combinatorial secondary Bruhat order}. The two orders coincide if $c=(1,1,\ldots,1)$ and it is a well-known fact of Schubert Calculus that in this case the geometric Bruhat order is also the same. The main result of our paper states:

\begin{thmx}
    The geometric Bruhat order on $\BCT(r,c)$ is the same as the combinatorial secondary Bruhat order. 
\end{thmx}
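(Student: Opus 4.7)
My strategy is to prove the two orders coincide by matching their covering relations and then showing each is the transitive closure of those covers. Recall that the combinatorial secondary Bruhat order on $\BCT(r,c)$ is, by definition, generated by ``switches'': $A \lessdot B$ is a cover if there exist rows $i<i'$ and columns $j<j'$ such that the $2\times 2$ submatrix at $\{i,i'\} \times \{j,j'\}$ reads $\bigl(\begin{smallmatrix} 0 & 1 \\ 1 & 0 \end{smallmatrix}\bigr)$ in $A$ and $\bigl(\begin{smallmatrix} 1 & 0 \\ 0 & 1\end{smallmatrix}\bigr)$ in $B$. It therefore suffices to prove: (i) each switch is a geometric Bruhat cover, and (ii) each geometric cover is a switch.

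For (i) I would exhibit, for every switch $A \lessdot B$, a torus-invariant rational curve $C \cong \mathbb{P}^1 \subset \X(r,c)$ whose two torus-fixed points are exactly $A$ and $B$. In the brane/tie-diagram description of bow-variety fixed points, a switch amounts to swapping the endpoints of two ties, and the one-parameter family of interpolating tie configurations supplies the desired $\mathbb{P}^1$. A generic cocharacter then orients $C$, placing $A$ in the full attracting set of $B$ and thereby establishing the geometric comparison.

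For (ii) I would work locally at a fixed point $B$ and study the iterated attracting set of $B$. The torus weights on $T_B \X(r,c)$ are computable from the tie data of $B$, and a direct analysis produces a short list of candidate ``one-step neighbors'' of $B$; the claim to verify is that these are precisely the BCTs differing from $B$ by a single switch. A useful reduction here is to apply Hanany--Witten moves, which preserve both the combinatorics of $\BCT(r,c)$ and the geometry of $\X(r,c)$, to isolate each potential switch inside an elementary sub-bow variety (built from copies of $T^*\mathbb{P}^1$ or small cotangent bundles of Grassmannians), where the calculation is transparent.

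The main obstacle I expect is in ruling out \emph{extra} comparisons in step (ii): showing that the full, iterated attracting set of $B$ contains no fixed points related to $B$ only in the (strictly finer, in general) combinatorial Bruhat order but not the secondary one. Such extra comparisons do exist abstractly in $\BCT(r,c)$, and because the attracting-set iteration is unbounded \emph{a priori}, a purely tangent-weight argument at $B$ cannot suffice. I would attempt to close this gap either by induction on length, using the curves of (i) to build long comparisons from switches, or by an independent graded-rank argument that pins down both posets by the same invariant (for example a suitably defined inversion count on BCTs).
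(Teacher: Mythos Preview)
Your plan rests on a one-to-one correspondence between $L_2\to I_2$ interchanges (``switches'') and torus-invariant $\mathbb{P}^1$'s that simply does not hold; both directions of your proposed matching fail already for $r=(1,1,1,1)$, $c=(2,2)$.

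For (i): the matrices
\[
M=\begin{pmatrix} 0&1\\ 0&1\\ 1&0\\ 1&0 \end{pmatrix},
\qquad
M'=\begin{pmatrix} 1&0\\ 0&1\\ 0&1\\ 1&0 \end{pmatrix}
\]
differ by a single switch in rows $1,3$, yet there is \emph{no} invariant $\mathbb{P}^1$ joining the corresponding fixed points. Your assertion that ``a switch amounts to swapping the endpoints of two ties, and the one-parameter family of interpolating tie configurations supplies the desired $\mathbb{P}^1$'' is exactly what fails: such a family exists only when the intervening rows satisfy extra constraints (in fact the Brualdi--Deaett conditions characterizing \emph{covers} of $\leqsb$, which you have conflated with arbitrary switches).

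For (ii): in the same example,
\[
M=\begin{pmatrix} 0&1\\ 0&1\\ 1&0\\ 1&0 \end{pmatrix},
\qquad
M''=\begin{pmatrix} 1&0\\ 1&0\\ 0&1\\ 0&1 \end{pmatrix}
\]
are joined by a single invariant $\mathbb{P}^1$, but passing from $M$ to $M''$ requires at least two switches. So the one-step geometric neighbours of a fixed point are \emph{not} its switch-neighbours, and your local tangent-weight analysis would produce the wrong list.

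The paper's first proof handles this asymmetry directly. Invariant curves are classified not by switches but by \emph{block swap moves}: swapping two columns within a contiguous run of rows whose two column-sums agree. One then shows (a) every $\leqsb$-cover is a block swap (hence a curve), and (b) every indecomposable block swap decomposes into a chain of switches, by induction on the block. The paper's second proof avoids the curve classification entirely: it embeds $\X(r,c)$ $\At$-equivariantly into $\TsFl(r,d)$, where the geometric and combinatorial orders are already known to agree, and transfers the comparison back through the fixed-component combinatorics. Your ``main obstacle'' is thus the whole difficulty, and neither of your suggested remedies meets it: an inversion-type count naturally controls $\leqb$ rather than $\leqsb$, and these differ in general.
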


After introducing the necessary combinatorics and geometry, we first define the geometric Bruhat order using the geometry of $\X(r,c)$. 
Then we present a characterization of the partial order using invariant curves on $\X(r,c)$. In Sections~\ref{sec:proof_invariant_curves} and \ref{sec:proof_resolution} we give two proofs of our main theorem. The first one relies on the recent work \cite{FosterShou} describing all invariant curves on $\X(r,c)$, and the second one relies on a technique, called {\em geometric resolution}, developed in \cite{BottaRimanyi} to prove the so-called 3d mirror symmetry property of bow varieties.

\smallskip

We will use the notation $\N=\{0,1,2,\ldots\}$ and $k^n=(k,k,\ldots,k)\in \N^n$.

\medskip

\noindent{\bf Acknowledgements.} The first author was supported as a part of NCCR SwissMAP, a National Centre of Competence in Research, funded by the Swiss National Science Foundation (grant number 205607) and grant 200021\_196892 of the Swiss National Science Foundation. The third author was supported by the NSF grants 2152309 and 2200867. We are grateful for useful discussions on the subject with R. Brualdi, A.~Buch, G. Felder, and Y.~Shou.

\section{Combinatorial Bruhat orders}

Let $\r\in \N^m$ and $\c\in \N^n$ satisfy $\sum_{i=1}^m r_i=\sum_{j=1}^n c_j$. Define $\BCT(r,c)$ to be the set of $m\times n$ matrices whose entries are all 0 or 1, and whose row sum vector is~$\r$ and column sum vector is~$\c$. 

\begin{example} \label{ex:c=1}\rm
The set of $n\times n$ permutation matrices is a special case for $\r=\c=(1,1,\ldots,1)$. A more general special case will play a role in this paper: let $\r=(r_1,r_2,\ldots,r_m)$, 
$n=\sum r_i$,
$\c=(1^n)$. In Algebra this $\BCT(\r,\c)$ is in bijection with $S_n/(\times_i S_{r_i})$, and in Geometry it parametrizes the torus fixed points (or the Schubert cells) of the partial flag variety 
$
\{V_{\bullet} : V_1^{r_1} \subset V_2^{r_1+r_2} \subset \ldots \subset V_{m-1}^{r_1+r_2+\ldots+r_{m-1}} \subset \C^n\}
$.
\end{example}


Combinatorial properties of $\BCT(\r,\c)$ have been extensively studied, see \cite{BrualdiBook} for a survey. For example, the Gale-Ryser theorem \cite[Ch.~6, Thm.~1.1]{RyserBook} provides a simple necessary and sufficient condition for $\BCT(r,c)$ to be non-empty. Another fundamental result is that the graph whose vertex set is $\BCT(r,c)$ and whose edges are pairs of matrices that only differ in the intersection of two rows and two columns by 
\begin{equation}\label{eq:LI}
L_2=\begin{pmatrix} 0 & 1 \\ 1 & 0 \end{pmatrix} \leftrightarrow \begin{pmatrix} 1 & 0 \\ 0 & 1 \end{pmatrix}=I_2
\end{equation} 
is connected  \cite{Ryser1957} (the rows and columns are not necessarily adjacent).
In this paper we will be concerned with partial orders on $\BCT(\r,\c)$. 

\subsection{The combinatorial Bruhat order}

For $M\in \BCT(\r,\c)$ let $\Sigma_M=(\sigma(M)_{i,j})$ be the $m\times n$ matrix where 
\[  
\sigma(M)_{i,j}=\sum_{k=1}^i \sum_{l=1}^j m_{kl}.
\]
Define the {\em combinatorial Bruhat order} by $M_1\leqb M_2$ if $\Sigma_{M_1}\geq \Sigma_{M_2}$ entrywise. 

\subsection{The combinatorial secondary Bruhat order}
The {\em combinatorial secondary Bruhat order} is defined by letting $M_1\leqsb M_2$ if $M_2$ can be transformed into $M_1$ by a sequence of $L_2 \to I_2$ interchanges, see \eqref{eq:LI}. Nota bene, we permit the surgery depicted in \eqref{eq:LI} in the intersection of two (not necessarily consecutive) rows and two (not necessarily consecutive) columns {\em only in one direction}: from $L_2$ to $I_2$. 

Recall that the covering relation of a (finite) partial order $\leq$ is defined by: $M$ covers $M'$ provided $M' < M$ and there is no $M''$ with   $M' <  M'' < M$. A combinatorial description for the covering relation of $\leqsb$ is proved by Brualdi and Deaett.

\begin{theorem}\label{thm:cover}\cite[Thm.~3.1]{BrualdiDeaett}
Let $M=(m_{ij})$ be a matrix in $\BCT(r,c)$ where $M[\{i,j\},\{k,l\}]=L_2$ (cf. \eqref{eq:LI}). Let $M'$ be the matrix obtained from $M$ by the $L_2\to I_2$ interchange that replaces $M[\{i,j\},\{k,l\}]$ with $I_2$. Then $M$ covers $M'$ in the combinatorial secondary Bruhat order on $\BCT(\r,\c)$ if and only if
\begin{enumerate}
    \item $m_{pk}=m_{pl}$ for $i<p<j$,
    \item $m_{iq}=m_{jq}$ for $k<q<l$,
    \item $m_{pk}=0$ and $m_{iq}=0$ imply $m_{pq}=0$ for $i<p<j,k<q<l$,
    \item $m_{pk}=1$ and $m_{iq}=1$ imply $m_{pq}=1$ for $i<p<j,k<q<l$.
\end{enumerate}
\end{theorem}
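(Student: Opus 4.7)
The relation $M' \leqsb M$ holds trivially from the single $L_2 \to I_2$ interchange in the statement, so the content is in the covering condition. I would treat the two directions separately, both hinging on the fact that $\leqsb$ refines $\leqb$: a single $L_2 \to I_2$ interchange on $\{i,j\}\times\{k,l\}$ changes $\Sigma$ only on the rectangle $\{i,\ldots,j-1\}\times\{k,\ldots,l-1\}$, where it goes up by $1$. In particular $M_1 \leqsb M_2$ implies $\Sigma_{M_1} \geq \Sigma_{M_2}$, and $\Sigma_{M'} - \Sigma_M$ is the indicator of the above rectangle.

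For the \emph{only if} direction, I would assume one of (1)--(4) fails and build an intermediate $M''$ with $M' \leqsb M'' \leqsb M$ and $M'' \notin \{M, M'\}$. Each failing condition supplies a witness inside the rectangle that lets the original interchange split into two or three smaller ones. If (1) fails with witness row $p$ and $m_{pk}=1$, $m_{pl}=0$, then $M[\{i,p\},\{k,l\}]$ is itself $L_2$; performing its interchange produces $M''$, and a subsequent interchange on $\{p,j\}\times\{k,l\}$ reaches $M'$. Case (2) is symmetric. If (3) fails with witness $(p,q)$ satisfying $m_{pk}=m_{iq}=0$, $m_{pq}=1$, I would first reduce to the case in which (1) and (2) hold (otherwise apply those cases), so that $m_{pl}=m_{jq}=0$. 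Then $M[\{p,j\},\{k,q\}]$ is $L_2$; interchanging there and following with two further interchanges on $\{i,j\}\times\{q,l\}$ and $\{i,p\}\times\{k,q\}$ recovers $M'$ while passing through a strictly intermediate $M''$. Case (4) is dual.

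For the \emph{if} direction, I would suppose for contradiction that an intermediate $M''$ exists with $M' \leqsb M'' \leqsb M$ and $M'' \notin \{M, M'\}$. Then $\Sigma_M \leq \Sigma_{M''} \leq \Sigma_{M'}$ entrywise, so $D := \Sigma_{M''} - \Sigma_M$ is $\{0,1\}$-valued and supported on $\{i,\ldots,j-1\}\times\{k,\ldots,l-1\}$. Applying the discrete second-difference recovery
\[
m''_{p,q} = \sigma(M'')_{p,q} - \sigma(M'')_{p-1,q} - \sigma(M'')_{p,q-1} + \sigma(M'')_{p-1,q-1},
\]
the support $S$ of $D$ determines precisely where $M''$ differs from $M$, and the requirement $m''_{p,q} \in \{0,1\}$ constrains $S$ in terms of the entries of $M$ on rows $\{i,\ldots,j\}$ and columns $\{k,\ldots,l\}$. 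Conditions (1) and (2) force $S$ to be a union of full columns $\{k,\ldots,l-1\}$ (resp.\ full rows) of the rectangle, and conditions (3) and (4) then force $S$ to be either empty or the whole rectangle, giving $M'' = M$ or $M'' = M'$, the desired contradiction.

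\textbf{Main obstacle.} The delicate part will be the \emph{if} direction, specifically the rigidity for $S$ coming from (3) and (4). While (1) and (2) straightforwardly rule out $S$ varying on rows $i, j$ and columns $k, l$, the interior conditions (3) and (4) are entry-level constraints, and translating them into a statement forbidding all shapes of $S$ other than $\emptyset$ and the full rectangle requires careful bookkeeping at positions where the $\{0,1\}$-integrality of $M''$ meets the row/column sum constraints. Keeping the case analysis compact — rather than enumerating all candidate shapes of $S$ — is the principal challenge.
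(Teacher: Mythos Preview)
The paper does not prove this theorem: it is quoted from \cite[Thm.~3.1]{BrualdiDeaett} and only used as a tool (in Section~\ref{subsection:first proof main thm}). So there is no in-paper proof to compare your proposal against.

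On the proposal itself: your \emph{only if} direction is essentially correct. The factorizations you describe for failures of (1) and (2) work (with the obvious symmetric subcase when $m_{pk}=0,\ m_{pl}=1$, where one performs the two interchanges in the opposite order), and your three-step decomposition for a failure of (3) checks out; (4) is dual.

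Your \emph{if} direction has the right framework --- using $\leqsb \Rightarrow \leqb$ to pin $\Sigma_{M''}$ between $\Sigma_M$ and $\Sigma_{M'}$, hence reducing to a $\{0,1\}$-valued $D$ supported on the rectangle --- but the rigidity step you flag as the obstacle is genuinely the whole content, and you have not yet shown how (3) and (4) finish it. The boundary constraints from (1) and (2) give monotonicity of $D$ along rows $i,j-1$ and columns $k,l-1$ in directions dictated by the entries $m_{iq}$ and $m_{pk}$, but this alone does not force $S$ to be a union of full rows or full columns of the rectangle; the interior $\{0,1\}$-constraint on $m''_{pq}$ couples all four corners of each unit square of $D$, and one has to combine this with (3)--(4) carefully. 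As written, the argument is a plan rather than a proof; you would need to carry out the induction (or a direct combinatorial argument on the shape of $S$) to close it. If you want a reference to compare against, the original argument in Brualdi--Deaett is the place to look.
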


\begin{example} \rm For $\r=\c=(2,1,2)$ the combinatorial Bruhat order and the combinatorial secondary Bruhat order on $\BCT(\r,\c)$ are the same. The Hasse diagram (only drawing the cover relations) is on the left of Figure~\ref{fig:212}.   
\begin{figure}
\begin{tikzpicture}[scale=0.8]
\node[] at (5,0) { $\footnotesize \begin{pmatrix} 1&1&0\\0&0&1\\1&0&1 \end{pmatrix}$};
\node[] at (0,0) {$\footnotesize  \begin{pmatrix} 1&0&1\\1&0&0\\0&1&1 \end{pmatrix}$};
\node[] at (2.5,2.5) {$\footnotesize\begin{pmatrix} 1&0&1\\0&1&0\\1&0&1 \end{pmatrix}$};
\node[] at (5,5) {$\footnotesize\begin{pmatrix} 0&1&1\\1&0&0\\1&0&1 \end{pmatrix}$};
\node[] at (0,5) {$\footnotesize\begin{pmatrix} 1&0&1\\0&0&1\\1&1&0 \end{pmatrix}$};
\draw [thick,blue] (.8,.8)--(1.7,1.7); \draw [thick,blue] (4.2,.8)--(3.3,1.7); 
\draw [thick,blue] (0.8,4.2)--(1.7,3.3);\draw [thick,blue] (4.2,4.2)--(3.3,3.3);
\node[blue,rotate=45] at (1.15,1.5) {$<$};
\node[blue,rotate=135] at (3.85,1.5) {$<$};
\node[blue,rotate=45] at (3.85,3.5) {$<$};
\node[blue,rotate=135] at (1.15,3.5) {$<$};
\end{tikzpicture}
\qquad\qquad
\begin{tikzpicture}[baseline=-17,scale=0.9]
\draw [thick,blue] (0,0)--(2.5,2.5); \draw [thick,blue] (5,0)--(2.5,2.5); 
\draw [thick,blue] (0,5)--(2.5,2.5);\draw [thick,blue] (5,5)--(2.5,2.5);
\draw [thick,blue] (0,0)--(0,5); \draw [thick,blue] (5,0)--(5,5);

\draw [thick,blue] (0,0)--(-1.5,1.5); \draw [thick,blue] (0,0)--(1.5,-1.5);
\draw [thick,blue] (0,0)--(-1.5,-1.5);\draw [thick,blue] (0,0)--(0,-1.5);
\draw [thick,blue] (0,5)--(-1.5,6.5);\draw [thick,blue] (0,5)--(0,6.5);
\draw [thick,blue] (0,5)--(1.5,6.5); \draw [thick,blue] (0,5)--(-1.5,3.5);

\draw [thick,blue] (5,0)--(6.5,1.5); \draw [thick,blue] (5,0)--(6.5,-1.5);
\draw [thick,blue] (5,0)--(3.5,-1.5);\draw [thick,blue] (5,0)--(5,-1.5);
\draw [thick,blue] (5,5)--(3.5,6.5);\draw [thick,blue] (5,5)--(5,6.5);
\draw [thick,blue] (5,5)--(6.5,6.5); \draw [thick,blue] (5,5)--(6.5,3.5);

\draw [thick,blue] (2.5,2.5)--(3.7,1); 
\draw [thick,blue] (2.5,2.5)--(3.4,1); 
\draw [thick,blue] (2.5,2.5)--(3 ,1); 
\draw [thick,blue] (2.5,2.5)--(2.7,1); 

\draw [thick,blue] (2.5,2.5)--(1.3,4); 
\draw [thick,blue] (2.5,2.5)--(1.6,4); 
\draw [thick,blue] (2.5,2.5)--(1.9,4); 
\draw [thick,blue] (2.5,2.5)--(2.3,4); 

\node at (-1.7,1) {\footnotesize $\frac{a_2}{a_3}\hbar$};
\node at (-.3,1) {\footnotesize $\frac{a_1}{a_3}$};
\node at (1.2,.4) {\footnotesize  {$\frac{a_1}{a_2}\h^{-1}$}};
\node at (-1.7,-1.1) {\footnotesize {$\frac{a_2}{a_1} \hbar^2$}};
\node at (-.4,-1.1) {\footnotesize {$\frac{a_3}{a_1}\hbar $}};
\node at (.6,-1.1) {\footnotesize  {$\frac{a_3}{a_2}$ }};

\node at (4.2,.3) {\footnotesize $\frac{a_2}{a_3}$};
\node at (5.3,1) {\footnotesize $\frac{a_1}{a_3}$};
\node at (6.6,1) {\footnotesize  {$\frac{a_2}{a_3}\hbar$}};
\node at (6.7,-1.1) {\footnotesize {$\frac{a_3}{a_2} \hbar $}};
\node at (5.4,-1.1) {\footnotesize {$\frac{a_3}{a_1} \hbar$}};
\node at (4.4,-1.1) {\footnotesize  {$\frac{a_3}{a_2} $}};

\node at (-1.7,4) {\footnotesize $\frac{a_3}{a_2}$};
\node at (-.3,4) {\footnotesize $\frac{a_3}{a_1}$};
\node at (1.2,4.6) {\footnotesize  {$\frac{a_3}{a_2}\hbar^{-1}$}};
\node at (-1.8,6.1) {\footnotesize {$\frac{a_2}{a_3} \hbar^2$}};
\node at (-.4,6.1) {\footnotesize {$\frac{a_1}{a_3} \hbar$}};
\node at (.6,6.1) {\footnotesize  {$\frac{a_2}{a_3} \hbar$}};

\node at (4.1,4.6) {\footnotesize $\frac{a_2}{a_1}$};
\node at (5.3,4) {\footnotesize $\frac{a_3}{a_1}$};
\node at (6.5,4) {\footnotesize  {$\frac{a_3}{a_2}$}};
\node at (6.7,6.1) {\footnotesize {$\frac{a_1}{a_2}\hbar$}};
\node at (5.4,6.1) {\footnotesize {$\frac{a_1}{a_3} \hbar$}};
\node at (4.4,6.1) {\footnotesize  {$\frac{a_2}{a_3} \hbar$}};

\node at (1.4,3) {\footnotesize $\frac{a_2}{a_3}\hbar$};
\node at (2.7,4) {\footnotesize $\frac{a_2}{a_3}\hbar$};
\node at (3.5,3) {\footnotesize  {$\frac{a_1}{a_2}$}};
\node at (1.5,2) {\footnotesize {$\frac{a_2}{a_1}\hbar$}};
\node at (2.4,1) {\footnotesize {$\frac{a_3}{a_2} $}};
\node at (3.5,2) {\footnotesize  {$\frac{a_3}{a_2} $}};

\node[fill,white,draw,circle,minimum size=.5cm,inner sep=0pt] at (0,0) {\ };
\node[draw,circle,minimum size=.5cm,inner sep=0pt] at (0,0) {\footnotesize $f_1$};
\node[fill,white,draw,circle,minimum size=.5cm,inner sep=0pt] at (5,0) {\ };
\node[draw,circle,minimum size=.5cm,inner sep=0pt] at (5,0) {\footnotesize $f_2$};
\node[fill,white,draw,circle,minimum size=.5cm,inner sep=0pt] at (2.5,2.5) {\ };
\node[draw,circle,minimum size=.5cm,inner sep=0pt] at (2.5,2.5) {\footnotesize $f_3$};
\node[fill,white,draw,circle,minimum size=.5cm,inner sep=0pt] at (0,5) {\ };
\node[draw,circle,minimum size=.5cm,inner sep=0pt] at (0,5) {\footnotesize $f_4$};
\node[fill,white,draw,circle,minimum size=.5cm,inner sep=0pt] at (5,5) {\ };
\node[draw,circle,minimum size=.5cm,inner sep=0pt] at (5,5) {\footnotesize $f_5$};
\end{tikzpicture}
\caption{{\em Left}: The Hasse diagram of $\BCT((2,1,2),(2,1,2))$ for either of the two combinatorial Bruhat orders $\leqb$, $\leqsb$. Observe that the two leftmost matrices (as well as the two rightmost matrices) are connected by an interchange \eqref{eq:LI}, but that relation is not a cover relation. {\em Right}: The fixed points and the invariant curves (together with their weights) of the bow variety $\Ch($\ttt{\fs 2\fs3\fs5\bs3\bs2\bs}$)$. The combs of blue lines around the middle indicate two 1-parameter families of invariant curves. The geometric Bruhat order, that is, the ``$a_{large}/a_{small}$ flow'' in the right picture coincides with the $>$ relation in the left picture. The bijective correspondence between interchanges of type \eqref{eq:LI} in the left picture and the ``bounded'' curves in the right picture holds for this example, but {not} in general.}
\label{fig:212}
\end{figure}
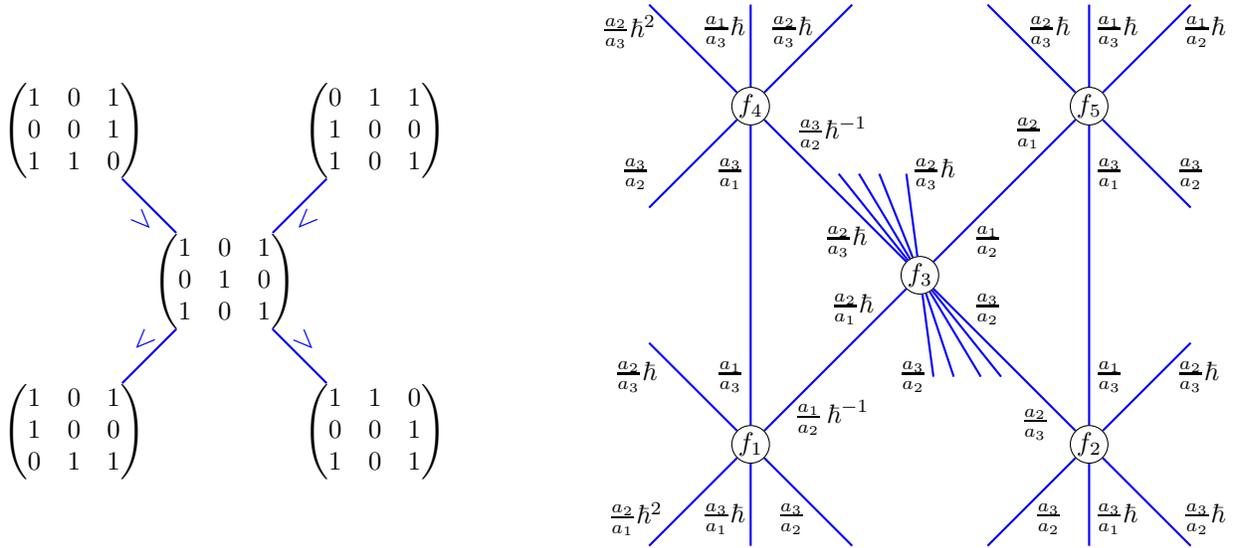
\end{example}

\subsection{The two combinatorial Bruhat orders are different}
\label{sect:The two combinatorial Bruhat orders are different}

It is clear that $M_1\leqsb M_2$ implies $M_1\leqb M_2$. However, the two partial orders are not the same. The counterexample from \cite[Sect.~2]{BrualdiDeaett} is the pair of matrices
\[
M_1=
\begin{pmatrix}
1&0&0&0&0&0\\
1&0&1&1&1&0\\
1&1&1&1&1&0\\
0&0&0&1&1&0\\
0&0&0&1&0&0\\
0&0&0&1&1&1
\end{pmatrix},
\qquad
M_2=
\begin{pmatrix}
0&0&0&1&0&0\\
1&1&0&1&1&0\\
1&0&1&1&1&1\\
1&0&0&1&0&0\\
0&0&0&0&1&0\\
0&0&1&1&1&0
\end{pmatrix},
\]
for which $M_1\leqb M_2$ but $M_1 \not\leqsb M_2$. This pair is the smallest counterexample the authors are aware of. They live in $\BCT((1,4,5,2,1,3),(3,1,2,5,4,1))$ which is an 89-element set. 

A well known fact is that for $r=c=1^n$, that is, for permutation matrices, the two combinatorial Bruhat orders coincide. However, a complete characterization of pairs of vectors $r,c$ for which the two combinatorial Bruhat orders coincide is not known. For some examples and counterexamples see \cite{Fernandes2019ClassesO, Zhang2022TheCO}. For example, it is proved that the orders $\leqb$ and $\leqsb$ coincide for $r=c=(k^n)$ if and only if either $0\leq n\leq 5$ or $k\in\{0,1,2,n-2,n-1,n\}$ with $n\geq 6$. Note that $|\BCT(3^6,3^6)|=297200$. It is also proved that the orders $\leqb$ and $\leqsb$ coincide if either $r$ or $c$ equals either $(1^n)$ or $(2^n)$. In particular, the two orders coincide for the set corresponding to the partial flag variety of Example \ref{ex:c=1}.

\section{Bow varieties}

\subsection{Brane Diagrams} \label{sec:branediagrams}

Combinatorial objects like
\begin{equation}
    \label{eq: example brane diagram}
\begin{tikzpicture}[baseline=0,scale=.4]
\begin{scope}[yshift=0cm]
\draw [thick,red] (0.5,0) --(1.5,2); 
\draw[thick] (1,1) node[left] {$\DD=$}--(2.5,1) node [above] {$2$} -- (31,1);
\draw [thick,blue](4.5,0) --(3.5,2);  
\draw [thick](4.5,1)--(5.5,1) node [above] {$2$} -- (6.5,1);
\draw [thick,red](6.5,0) -- (7.5,2);  
\draw [thick](7.5,1) --(8.5,1) node [above] {$2$} -- (9.5,1); 
\draw[thick,blue] (10.5,0) -- (9.5,2);  
\draw[thick] (10.5,1) --(11.5,1) node [above] {$4$} -- (12.5,1); 
\draw [thick,red](12.5,0) -- (13.5,2);   
\draw [thick](13.5,1) --(14.5,1) node [above] {$3$} -- (15.5,1);
\draw[thick,red] (15.5,0) -- (16.5,2);  
\draw [thick](16.5,1) --(17.5,1) node [above] {$3$} -- (18.5,1);  
\draw [thick,red](18.5,0) -- (19.5,2);  
\draw [thick](19.5,1) --(20.5,1) node [above] {$4$} -- (21.5,1);
\draw [thick,blue](22.5,0) -- (21.5,2);
\draw [thick](22.5,1) --(23.5,1) node [above] {$3$} -- (24.5,1);  
\draw[thick,red] (24.5,0) -- (25.5,2); 
\draw[thick] (25.5,1) --(26.5,1) node [above] {$2$} -- (27.5,1);
\draw [thick,blue](28.5,0) -- (27.5,2);  
\draw [thick](28.5,1) --(29.5,1) node [above] {$2$} -- (30.5,1);   
\draw [thick,blue](31.5,0) -- (30.5,2);   
\end{scope}
\end{tikzpicture}
\end{equation}
 will be called (type A) brane diagrams. The red forward-leaning lines are called NS5 branes, denoted by $\Zb$. The blue backward-leaning lines are called D5 branes, denoted by $\Ab$. The black lines between 5-branes are called D3 branes, denoted by $\Xb$, and the integer sitting there is called its multiplicity or dimension $d_{\Xb} >0$. For brevity, we often use the more compact notation
\[
\DD=\ttt{\fs $2$\bs $2$\fs $2$\bs $4$\fs $3$\fs $3$\fs $4$\bs $3$\fs $2$\bs $2$\bs }.
\]

If all NS5 branes are to the left of all D5 branes, we call the diagram {\em separated}, if all NS5 branes are to the right of all D5 branes, we call the diagram {\em co-separated}. An example of separated brane diagram is $\ttt{\fs $1$\fs$3$\fs $4$\fs $5$\bs $3$\bs $2$\bs}$.

The {\em charge} of an NS5  brane $\Zb$ or a D5 brane $\Ab$ is defined by
\begin{align*}
\ch(\Zb)= & (d_{\Zb^+}-d_{\Zb^-})+|\{\text{D5 branes left of $\Zb$}\}|,
\\ 
\ch(\Ab)= & (d_{\Ab^-}-d_{\Ab^+})+|\{\text{NS5 branes right of $\Ab$}\}|.
\end{align*}
Here, the superscripts $+, -$ refer to the D3 branes directly to the right and left. 
By convention, we order the D5 branes and the NS5 branes from left to right. For example, the leftmost NS5 brane in \eqref{eq: example brane diagram} is $\Zb_1$ and the rightmost is $\Zb_6$. We also collect the charges the D5 and the NS5 charges in vectors $c=(c_1, \dots c_n)\in \N^n$ and $r=(r_1,\dots r_m)\in \N^m$ by setting $c_i=\ch(\Ab_i)$ and $r_i=\ch(\Zb_i)$. We will refer to $c$ and $r$ as D5 and NS5 charge vector, respectively.

\subsection{Bow varieties} \label{sec:def of bow variety}
In this section we recall the quiver description of bow variety introduced by Nakajima and Takajiama in \cite{Nakajima_Takayama}. We refer to [loc. cit.] for more details. Let $\DD$ be a given brane diagram. To a D3 brane $\Xb$ we associate a complex vector space $W_{\Xb}$ of dimension $d_{\chi}$. To a D5 brane $\Ab$ we associate a one-dimensional space $\C_{\Ab}$ with the standard $\GL(\C_{\Ab})$ action and the ``three-way part''
\begin{align}
\begin{split}
\label{eq: three way part}
\MM_{\Ab}= &\Hom(W_{{\Ab}^+},W_{{\Ab}^-}) \oplus
\h\Hom(W_{{\Ab}^+},\C_{\Ab}) \oplus \Hom(\C_{\Ab},W_{{\Ab}^-}) \\
&  \oplus\h\End(W_{{\Ab}^-}) \oplus \h\End(W_{{\Ab}^+}),
\end{split}
\end{align} 
with elements denoted $(A_{\Ab}, b_{\Ab}, a_{\Ab}, B_{\Ab}, B'_{\Ab})$, and $\NN_{\Ab}=\h\Hom(W_{{\Ab}^+},W_{{\Ab}^-})$.  Here, $\hbar$ indicates the action of an additional torus $\Cs_{\hbar}$.  To an NS5 brane $\Zb$ we associate the ``two-way part''
\begin{equation}
    \label{eq: two way part}
    \MM_{\Zb}= \h\Hom(W_{{\Zb}^+},W_{{\Zb}^-}) \oplus
\Hom(W_{{\Zb}^-},W_{{\Zb}^+}),
\end{equation}
whose elements will  be denoted by $(C_{\Zb}, D_{\Zb})$. To a D3 brane $\Xb$ we associate $\NN_{\Xb}=\h\End(W_{\Xb})$. In these formulas, the $\h$ factor means an action of an extra $\Cs$ factor called $\Cs_{\h}$.
Let 
\[
\MM=\bigoplus_{\Ab} \MM_{\Ab} \oplus \bigoplus_{\Zb} \MM_{\Zb}, \qquad \NN=\bigoplus_{\Ab} \NN_{\Ab} \oplus \bigoplus_{\Xb} \NN_{\Xb}.
\]
We define a map $\mu:\MM \to \NN$ componentwise as follows.
\begin{itemize}
\item 
The $\NN_{\Ab}$-component of $\mu$ is 
$B_{\Ab} A_{\Ab} -A_{\Ab} B'_{\Ab}+a_{\Ab} b_{\Ab}$.
\item
The $\NN_{\Xb}$-components of $\mu$ depend on the diagram:
\begin{itemize}
\item[\ttt{\bs -\bs}] If $\Xb$ is in between two D5 branes then it is $B'_{\Xb^-}-B_{\Xb^+}$. 
\item[\ttt{{\fs}-{\fs}}] If $\Xb$ is in between two NS5 branes then it is $C_{\Xb^+}D_{\Xb^+}$ $-D_{\Xb^-}C_{\Xb^-}$.
\item[\ttt{{\fs}-\bs}] If $\Xb^-$ is an NS5 brane and $\Xb^+$ is a D5 brane then it is $-D_{\Xb^-}C_{\Xb^-}$ $-B_{\Xb^+}$.
\item[\ttt{\bs -{\fs}}] If $\Xb^-$ is a  D5 brane and $\Xb^-$ is an NS5 brane then it is $C_{\Xb^+}D_{\Xb^+}+B'_{\Xb^-}$.
\end{itemize}
\end{itemize}
Let $\widetilde{\mathcal M}$ consist of points of $\mu^{-1}(0)\subset \MM$ for which the stability conditions  
\begin{itemize}
\item[(S1)]  if $S\leq W_{{\Ab}^+}$ is a subspace with $B'_{\Ab}(S)\subset S$, $A_{\Ab}(S)=0$, $b_{\Ab}(S)=0$ then $S=0$,
\item[(S2)]  if $T \leq W_{{\Ab}^-}$ is a subspace with $B_{\Ab}(T)\subset T$, $Im(A_{\Ab})+Im(a_{\Ab})\subset T$ then $T=W_{\Ab^-}$
\end{itemize}
hold. As shown by Takayama \cite[prop. 2.9]{Takayama}, the open subset $\widetilde{\mathcal M}\subset \mu^{-1}(0)$ is affine. Let $G=\prod_{\Xb} \GL(W_\Xb)$ and consider the character 
\begin{equation}\label{eq:character}
\chi: G\ \to \C^{\times}, 
\qquad\qquad
(g_\Xb)_{\Xb} \mapsto \prod_{\Xb'} \det(g_{\Xb'}),
\end{equation}
where the product runs for D3 branes $\Xb'$ such that $(\Xb')^-$ is an NS5 brane (in picture \ttt{{\fs}X'}). The Bow variety $X(\DD)$ is defined as the GIT quotient
\[
X(\DD):=\widetilde{\mathcal{M}}//^{\chi} G.
\]

Let $G$ act on $\widetilde{\mathcal M} \times \C$ by $g.(m,x)=(gm,\chi^{-1}(g)x)$. We say that $m\in \widetilde{\mathcal M}$ is semistable (notation $m\in \widetilde{\mathcal M}^{ss}$) if the closure of the orbit $G(m, x)$ in $\widetilde{\mathcal{M}}\times \C$ does not intersect with $\widetilde{\mathcal{M}}\times\{0\}$ for any (every) $x\neq 0$. We say that $m$ is stable (notation $m\in \widetilde{\mathcal M}^{s}$) if $G(m,x)$ is closed and the stabilizer of $(m,x)$ is finite for $x\neq 0$. Although a priori different, semistability and stability coincide for the chosen character $\chi$. This fact can be proved by transposing Nakajima's original argument \cite[Section 3]{Nak98} in the setting of bow varieties. Since the stabilizers are trivial \cite[Lemma 2.10]{Nakajima_Takayama}, the GIT quotient map $\widetilde{\mathcal{M}}^{ss}\to X(\DD)$ is a $G$-torsor. Consequently, $X(\DD)$  coincides with the orbit space $\widetilde{\mathcal{M}}^{ss}/G=\widetilde{\mathcal{M}}^{s}/G$. In particular, $X(\DD)$ is smooth.

Let $n$ be the number of D5 branes in $\DD$. Beyond the complex structure, the bow variety $X(\DD)$ carries a symplectic structure\footnote{We do not make use of the symplectic structure in this article.} as well as the action of a $n+1$ dimensional torus \cite{Nakajima_Takayama}. The torus $\Tt$ acting on $X(\DD)$ can be decomposed as $\Tt=\At\times \C_{\h}^\times$, where $\At=\prod_{i=1}^n \GL(\C_{\Ab_i})$ is a rank $n$ torus rescaling the spaces $\C_{\Ab_i}$ in $\MM_{\Ab_i}$, cf. \eqref{eq: three way part}, and $\C_{\h}^\times$ rescales the arrows of \eqref{eq: three way part} and \eqref{eq: two way part} as prescribed by the weight $\hbar$. Since both actions preserve the zero locus $\mu^{-1}(0)$, they descend to the GIT quotient $X(\DD)$.

\subsection{Torus-fixed points}

Let $\DD$ be a brane diagram. A {\em tie diagram} of $\DD$ is a set of pairs $(\Zb,\Ab)$ where $\Zb$ is an NS5 brane and $\Ab$ is a D5 brane. We indicate such a pair by a curve (`tie') connecting the two fivebranes. It is aesthetic to draw the curve above the diagram if $\Zb$ is to the left of $\Ab$, and below otherwise. It is also required that {\em each D3 brane is covered by these curves as many times as its multiplicity}. Here are two of the 123 different tie diagrams of the brane diagram of Section~\ref{sec:branediagrams}:
\[
\begin{tikzpicture}[baseline=0,scale=.4]
\begin{scope}[yshift=0cm]
\draw [thick,red] (0.5,0) --(1.5,2); 
\draw[thick] (1,1) --(2.5,1) node [above] {$2$} -- (31,1);
\draw [thick,blue](4.5,0) --(3.5,2);  
\draw [thick](4.5,1)--(5.5,1) node [above] {$2$} -- (6.5,1);
\draw [thick,red](6.5,0) -- (7.5,2);  
\draw [thick](7.5,1) --(8.5,1) node [above] {$2$} -- (9.5,1); 
\draw[thick,blue] (10.5,0) -- (9.5,2);  
\draw[thick] (10.5,1) --(11.5,1) node [above] {$4$} -- (12.5,1); 
\draw [thick,red](12.5,0) -- (13.5,2);   
\draw [thick](13.5,1) --(14.5,1) node [above] {$3$} -- (15.5,1);
\draw[thick,red] (15.5,0) -- (16.5,2);  
\draw [thick](16.5,1) --(17.5,1) node [above] {$3$} -- (18.5,1);  
\draw [thick,red](18.5,0) -- (19.5,2);  
\draw [thick](19.5,1) --(20.5,1) node [above] {$4$} -- (21.5,1);
\draw [thick,blue](22.5,0) -- (21.5,2);
\draw [thick](22.5,1) --(23.5,1) node [above] {$3$} -- (24.5,1);  
\draw[thick,red] (24.5,0) -- (25.5,2); 
\draw[thick] (25.5,1) --(26.5,1) node [above] {$2$} -- (27.5,1);
\draw [thick,blue](28.5,0) -- (27.5,2);  
\draw [thick](28.5,1) --(29.5,1) node [above] {$2$} -- (30.5,1);   
\draw [thick,blue](31.5,0) -- (30.5,2);   
\draw [dashed, black](1.5,2.2) to [out=25,in=155] (3.5,2.2);
\draw [dashed, black](19.5,2.2) to [out=25,in=155] (21.5,2.2);
\draw [dashed, black](1.5,2.2) to [out=25,in=180] (16,4) to [out=0,in=155] (30.5,2.2);
\draw [dashed, black](16.5,2.2) to [out=25,in=165] (30.5,2.2);
\draw [dashed, black](10.5,-.2) to [out=-25,in=-155] (12.5,-.2);
\draw [dashed, black](10.5,-.2) to [out=-35,in=-145] (15.5,-.2);
\draw [dashed, black](4.5,-.2) to [out=-35,in=180] (14.4,-2) to [out=0,in=-145] (24.5,-.2);
\end{scope}
\end{tikzpicture}
\]
\[
\begin{tikzpicture}[baseline=0,scale=.4]
\begin{scope}[yshift=0cm]
\draw [thick,red] (0.5,0) --(1.5,2); 
\draw[thick] (1,1) --(2.5,1) node [above] {$2$} -- (31,1);
\draw [thick,blue](4.5,0) --(3.5,2);  
\draw [thick](4.5,1)--(5.5,1) node [above] {$2$} -- (6.5,1);
\draw [thick,red](6.5,0) -- (7.5,2);  
\draw [thick](7.5,1) --(8.5,1) node [above] {$2$} -- (9.5,1); 
\draw[thick,blue] (10.5,0) -- (9.5,2);  
\draw[thick] (10.5,1) --(11.5,1) node [above] {$4$} -- (12.5,1); 
\draw [thick,red](12.5,0) -- (13.5,2);   
\draw [thick](13.5,1) --(14.5,1) node [above] {$3$} -- (15.5,1);
\draw[thick,red] (15.5,0) -- (16.5,2);  
\draw [thick](16.5,1) --(17.5,1) node [above] {$3$} -- (18.5,1);  
\draw [thick,red](18.5,0) -- (19.5,2);  
\draw [thick](19.5,1) --(20.5,1) node [above] {$4$} -- (21.5,1);
\draw [thick,blue](22.5,0) -- (21.5,2);
\draw [thick](22.5,1) --(23.5,1) node [above] {$3$} -- (24.5,1);  
\draw[thick,red] (24.5,0) -- (25.5,2); 
\draw[thick] (25.5,1) --(26.5,1) node [above] {$2$} -- (27.5,1);
\draw [thick,blue](28.5,0) -- (27.5,2);  
\draw [thick](28.5,1) --(29.5,1) node [above] {$2$} -- (30.5,1);   
\draw [thick,blue](31.5,0) -- (30.5,2);   
\draw [dashed, black](1.5,2.2) to [out=25,in=155] (9.5,2.2);
\draw [dashed, black](1.5,2.2) to [out=45,in=180] (13.5,4) to[out=0,in=135] (21.5,2.2);
\draw [dashed, black](16.5,2.2) to [out=45,in=180] (23.5,4) to [out=0,in=135] (30.5,2.2);
\draw [dashed, black](19.5,2.2) to [out=45,in=180] (25,3.2) to [out=0,in=155] (30.5,2.2);
\draw [dashed, black](19.5,2.2) to [out=25,in=155] (21.5,2.2);
\draw [dashed, black](10.5,-.2) to [out=-25,in=-155] (12.5,-.2);
\draw [dashed, black](10.5,-.2) to [out=-35,in=-145] (15.5,-.2);
\draw [dashed, black](10.5,-.2) to [out=-35,in=-145] (18.5,-.2);
\draw [dashed, black](22.5,-.2) to [out=-25,in=-155] (24.5,-.2);
\end{scope}
\end{tikzpicture}.
\]
Given a torus $A\subset \Tt$, we denote by $X(\DD)^A$ the $A$-fixed locus. 
\begin{theorem}\cite{RimanyiShou, BottaRimanyi}
The natural inclusion $X(\DD)^{\Tt}\hookrightarrow X(\DD)^{\At}$ is the identity map. The set $X(\DD)^{\Tt}$ (equivalently, $X(\DD)^{\At}$) is finite. It is in bijection with the set of tie diagrams of $\DD$. It is also in bijection with $\BCT(r,c)$ where $r$ and $c$ are the charge vectors of $\DD$.
\end{theorem}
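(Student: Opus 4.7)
The plan is to prove all four assertions together by constructing an explicit bijection between tie diagrams of $\DD$ and $\Tt$-fixed points of $X(\DD)$, and then matching tie diagrams combinatorially with $\BCT(r,c)$. Along the way the equality $X(\DD)^{\Tt}=X(\DD)^{\At}$ will fall out, because the $\Tt$-fixed lifts we construct exhaust the $\At$-fixed points.

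First I would produce fixed points from tie diagrams. For a tie diagram $T$, I decompose each D3 space $W_\Xb$ as a direct sum of one-dimensional summands, one for each tie passing over the D3 brane $\Xb$; the tie-covering condition makes this compatible with the prescribed dimension $d_\Xb$. Along each strand I set the maps $C_\Zb, D_\Zb, A_\Ab, B_\Ab, B'_\Ab$ to be the tautological identifications between adjacent summands, while the framings $a_\Ab, b_\Ab$ are supported only on the strand of $T$ terminating at $\Ab$; all remaining matrix entries vanish. A direct check shows $\mu(m_T)=0$ and the stability conditions (S1)--(S2). Because the strand decomposition is $\At$-equivariant and each arrow carries a definite $\Tt$-weight, the rescaling of $m_T$ by $t\in\Tt$ is absorbed by a diagonal gauge transformation in $G$, so the class $[m_T]\in X(\DD)$ is $\Tt$-fixed.

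Conversely, let $p\in X(\DD)^{\At}$ and choose any lift $m\in\widetilde{\mathcal M}^{ss}$. The $\At$-fixedness yields a cocycle $\phi:\At\to G$ with $a\cdot m=\phi(a)\cdot m$ for every $a\in\At$, endowing each $W_\Xb$ with an $\At$-weight decomposition. The relations $\mu(m)=0$ force the quiver maps to respect these weight decompositions up to $\hbar$-shifts, and the stability conditions (S1)--(S2), together with the rank of $\At$ equalling the number of D5 branes, imply that each weight space is one-dimensional and that the non-trivially acting arrows organize into disjoint strands, each beginning at a D5 brane (via the framings $a_\Ab, b_\Ab$) and ending at an NS5 brane (controlled by the stability character $\chi$). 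Recording these strands gives a tie diagram $T$ with $m$ gauge-equivalent to $m_T$. This proves finiteness, the bijection with tie diagrams, and the equality $X(\DD)^{\Tt}=X(\DD)^{\At}$ simultaneously, since each such lift is $\Tt$-fixed by the previous step.

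Finally, encoding a tie diagram $T$ as the $m\times n$ matrix $M(T)$ with $(M(T))_{ij}=1$ iff a tie of $T$ joins $\Zb_i$ to $\Ab_j$, one verifies by a telescoping count of the ties covering the two D3 branes adjacent to $\Zb_i$ that
\[
\sum_j (M(T))_{ij}=(d_{\Zb_i^+}-d_{\Zb_i^-})+|\{\text{D5 branes left of }\Zb_i\}|=\ch(\Zb_i)=r_i,
\]
and symmetrically $\sum_i (M(T))_{ij}=c_j$. Thus $T\mapsto M(T)$ is a bijection onto $\BCT(r,c)$. The main obstacle is the second step: showing that an arbitrary $\At$-fixed lift decomposes into one-dimensional strands of the form $m_T$. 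This is where the full force of $\mu=0$, both stability conditions, and the rank of $\At$ must be used simultaneously; without all three ingredients, weight spaces could be multi-dimensional or strands could fail to terminate properly at a five-brane. The remaining steps are either direct verifications or short combinatorial identities.
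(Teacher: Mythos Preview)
The paper does not prove this theorem; it is quoted from \cite{RimanyiShou, BottaRimanyi}, and the paper only sketches the two bijections. So there is no ``paper's own proof'' to compare against. Your first two steps---constructing a $\Tt$-fixed representative $m_T$ from a tie diagram and, conversely, extracting a tie diagram from the $\At$-weight decomposition of an $\At$-fixed lift---follow the same line as the butterfly-diagram construction of \cite[Sect.~4.3]{RimanyiShou}. The second step is sketchy (one-dimensionality of weight spaces and the strand structure require the full strength of $\mu=0$ and (S1)--(S2), as you acknowledge), but the outline is correct.

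There is, however, a concrete error in your third step. Your rule $M(T)_{ij}=1$ iff $(\Zb_i,\Ab_j)$ is a tie is \emph{not} the correct bijection to $\BCT(r,c)$ for a general brane diagram, and your telescoping identity fails. If $R$ (resp.\ $L$) denotes the number of ties at $\Zb_i$ going to D5 branes on its right (resp.\ left), then the tie-covering count gives $d_{\Zb_i^+}-d_{\Zb_i^-}=R-L$, so $\ch(\Zb_i)=R-L+|\{\text{D5 left of }\Zb_i\}|$, whereas your row sum is $R+L$; these agree only when $L=|\{\text{D5 left of }\Zb_i\}|=0$, i.e.\ in the separated case. For the running example $\DD=\ttt{\fs 2\bs 2\fs 2\bs 4\fs 3\fs 3\fs 4\bs 3\fs 2\bs 2\bs}$ your rule gives row sums $(2,0,1,2,1,1)$ for the first tie diagram, not $r=(2,1,1,2,3,2)$. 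The correct rule, as the paper records, is
\[
M(T)_{ij}=
\begin{cases}
1 & \text{if }(\Zb_i,\Ab_j)\text{ is a tie and }\Zb_i\text{ is left of }\Ab_j,\text{ or not a tie and }\Zb_i\text{ is right of }\Ab_j,\\
0 & \text{otherwise},
\end{cases}
\]
for which the row sum is $R+(|\{\text{D5 left of }\Zb_i\}|-L)=\ch(\Zb_i)$, and symmetrically for columns. Your argument becomes correct once this rule is substituted; alternatively, you may first pass to the separated representative via Hanany--Witten (Section~\ref{section: hanany witten}), where your simpler rule does apply.
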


Let us sketch the explicit bijections from \cite{RimanyiShou}. A fixed point on $\Ch(\DD)$ is described by a representative in $\MM$, that is, by a collection of matrices $A_{\Ab},B_{\Ab},B'_{\Ab},a_{\Ab},b_{\Ab},C_{\Zb},D_{\Zb}$. These matrices turn out to be ``combinatorial'', having only $0$, $1$, $-1$ entries. The tie diagram is an economic way of describing the positions of the non-zero entries---the details, under the name of ``butterfly diagrams'' are in \cite[Sect.~4.3]{RimanyiShou}.

The bijection between the set of tie diagrams and BCTs is as follows. The $(\Zb, \Ab)$-entry of the BCT corresponding to a tie diagram is 
\[
\begin{cases}
1 & \text{ if } (\Zb-\Ab \text{ is a tie and $\Zb$ is left of $\Ab$}) \text{ or }
(\Zb-\Ab \text{ is not a tie and $\Zb$ is right of $\Ab$})
\\
0 & \text{ if } (\Zb-\Ab \text{ is a tie and $\Zb$ is right of $\Ab$}) \text{ or }
(\Zb-\Ab \text{ is not a tie and $\Zb$ is left of $\Ab$}).
\end{cases}
\]
For example, the BCTs corresponding to the two tie diagrams above are
\[
\begin{pmatrix}
1 & 0 & 0 & 0 & 1 \\
1 & 0 & 0 & 0 & 0 \\
1 & 0 & 0 & 0 & 0 \\
1 & 0 & 0 & 0 & 1 \\
1 & 1 & 1 & 0 & 0 \\
0 & 1 & 1 & 0 & 0 
\end{pmatrix},
\begin{pmatrix}
0 & 1 & 1 & 0 & 0 \\
1 & 0 & 0 & 0 & 0 \\
1 & 0 & 0 & 0 & 0 \\
1 & 0 & 0 & 0 & 1 \\
1 & 0 & 1 & 0 & 1 \\
1 & 1 & 0 & 0 & 0 
\end{pmatrix}
\in
\BCT((2,1,1,2,3,2),(5,2,2,0,2)).
\]

\medskip

\noindent{\em Assumption.} In the whole paper we assume that $\Ch(\DD)$ has at least one fixed point. Equivalently, that for the associated charge vectors $\r,\c$ the set $\BCT(\r,\c)$ is not empty.


\subsection{Hanany-Witten transition}
\label{section: hanany witten}
The local surgery on a brane diagram 
\[
\ttt{$d_1$\fs $d_2$\bs $d_3$} \leftrightarrow \ttt{$d_1$\bs $d_1+d_3-d_2+1$\fs $d_3$}
\]
(in either direction) is called Hanany-Witten (HW) transition, cf. \cite{HW}. The charge vectors form a complete invariant of the HW equivalence class of brane diagrams. Given a diagram $\DD$ with D5 charge vector $c$ and NS5 charge vector $r$, the separated diagram $\DD'$ in its HW equivalence class is
\[
\DD'=\ttt{\fs $r_1$\fs $r_1+r_2$\fs $r_1+r_2+r_3$\fs \ldots \bs $c_{n-2}+c_{n-1}+c_n$\bs $c_{n-1}+c_n$\bs $c_n$\bs}.
\]
In particular, the unique D3 brane $\Xb$ of $\DD'$ lying in between a D5 brane and an NS5 brane has dimension vector $d_{\Xb}=\sum_i^n c_i=\sum_j^m r_j$. Here, and also in the rest of the article, the integers $n$ and $m$ will denote, respectively, the number of D5 and NS5 branes in a given brane diagram.

The HW transition induces an explicit isomorphism between the associated bow varieties. Let the D5 brane involved in the HW transition be $\Ab$ (ie. the blue $\ttt{\bs}$ brane that moves over an NS5 brane in the surgery). The HW isomorphism is equivariant for the torus $\Tt=\At \times \Cs_\hbar$ along the automorphism of $\Tt$ obtained by re-scaling $\Cs_{\Ab}$ by $\hbar\in \Cs_{\h}$. 

Since the $\Cs_{\h}$ action will not concern us, we will not distinguish between $\Ch(\DD)$ varieties associated to HW equivalent brane diagrams, and use the notation $\X(r,c)$ for $\Ch(\DD)$ for any $\DD$ with charge vectors $r$ and $c$. If we wish, we can choose the separated representative.

\section{The geometric Bruhat order}

\subsection{Geometric Bruhat order}
Let us choose a permutation $\sigma\in S_n$. It induces a cocharacter
\[
\lambda_\sigma: \C^{\times}\to \At \times \C_\hbar^\times,\qquad\qquad t\mapsto (t^{\sigma(1)},t^{\sigma(2)},\ldots,t^{\sigma(n)},1).
\]
For a torus fixed point $f\in \Ch(\DD)$ we define the corresponding attractive manifold
\[
\Attr_{\sigma}(f)=\{ x\in \Ch(\DD) : \lim_{t\to 0} \lambda_\sigma(t)\cdot x = f\}.
\]
\begin{remark}
\label{rem:generic cocharacter}
Since the tangent $\At$-weights at the fixed points in $X(\DD)$ are of the form ${a_i}/{a_j}$, their pullback by $\lambda_\sigma$ are nontrivial. As a consequence, the $\Cs$-action on $X(\DD)$ is generic, i.e. we have $X(\DD)^{\Cs}=X(\DD)^{\At}$.
\end{remark}

For torus fixed points $f,g \in \Ch(\DD)^{\At}$ we define 
\[
f\leq g\qquad\qquad \text{if}\qquad\qquad f\in \overline{\Att{\sigma}(g)},
\]
and its transitive closure will be called the {\em geometric Bruhat order} and denoted by $\leqgb$. 
\begin{remark}
    This definition of geometric partial order applies to any smooth quasiprojective variety $X\subseteq \Pe^N $ equipped with the action of a torus $\At$ and does not exploit extra geometric structure carried by bow varieties. In particular, the fact that this is really a partial order, i.e. that $f\leqgb g $ and $g\leqgb f$ imply $f=g$, follows from smoothness, which in turn ensures that the embedding $X\subseteq \Pe^N$ can be made $\At$-equivariant \cite[Theorem 5.1.25]{Chriss_book}.
\end{remark}

Notice that $\leqb$, $\leqsb$, $\leqgb$ are partial orders on the same set $\BCT(\r,\c)$, the last one depending on the choice of the permutation $\sigma$. 

\begin{remark} \label{GenoaCFC}
In the $c=1^n$ special case, which is relevant in Schubert Calculus, the three partial orders are the same (where $\leqgb$ is meant with the identity permutation). The coincidence of the first two is proved in \cite{BrualdiDeaett, Fernandes2019ClassesO} and the equality of $\leqb$ and $\leqgb$ is the content of \cite[Thm.5A]{proctor}.
\end{remark}

The main theorem of our paper is the identification of the geometric Bruhat order with one of the combinatorial ones, for arbitrary $r,c$. 

\begin{theorem} \label{thm:main}
The geometric Bruhat order for the identity permutation is the same as the secondary combinatorial Bruhat order.
\end{theorem}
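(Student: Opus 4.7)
The plan is to reduce Theorem~\ref{thm:main} to a comparison of covering relations. Since $\X(r,c)$ is smooth and the $\lambda_\sigma$-action is generic by Remark~\ref{rem:generic cocharacter}, the closure relation $f \leq g$ used to define $\leqgb$ is generated by one-dimensional $\At$-orbits: whenever $f \neq g$ and $f \in \overline{\Attr_\sigma(g)}\cap \X(\DD)^{\At}$ with no intermediate fixed point in between on an invariant curve, there is an $\At$-invariant curve $C$ joining $f$ and $g$ whose $\lambda_\sigma$-flow points from $g$ to $f$. Thus $\leqgb$ is the transitive closure of the ``oriented invariant-curve'' relation on $\BCT(r,c)$. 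My strategy is then to establish a bijective match between the cover relations of $\leqsb$ (described by Theorem~\ref{thm:cover}) and the oriented invariant curves, from which the equality of transitive closures follows.

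First I would tackle the easier inclusion $\leqsb \subseteq \leqgb$. Given a cover $M' \leqsb M$ via an $L_2 \to I_2$ interchange on rows $i<j$ and columns $k<l$ satisfying the Brualdi--Deaett conditions (1)--(4), I would construct an explicit $\At$-invariant rational curve in $\X(r,c)$ with limits $f_M$ and $f_{M'}$. The most natural way to do this is to pick a convenient brane-diagram representative of $\X(r,c)$ (e.g.\ the separated one), write down the quiver matrices $(A,B,B',a,b,C,D)$ realizing $f_M$ as in \cite{RimanyiShou}, and deform one of these matrix entries by a parameter $t \in \C$ so that at $t=0$ one recovers $f_M$ and as $t\to\infty$ (after rescaling by the $\lambda_\sigma$-action) one lands on the representative of $f_{M'}$. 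The Brualdi--Deaett conditions on entries between rows $i,j$ and columns $k,l$ correspond exactly to the vanishing constraints needed for this one-parameter family to satisfy $\mu=0$ and the stability conditions (S1), (S2), and for the flow direction to be correct when $\sigma$ is the identity.

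For the reverse inclusion $\leqgb \subseteq \leqsb$ I would invoke the Foster--Shou classification of all $\At$-invariant curves on $\X(r,c)$ from \cite{FosterShou}. Every such curve connects two fixed points whose BCTs, I expect, differ either in a single $L_2\leftrightarrow I_2$ position or in a more elaborate pattern of entries. The key step is to check that under the $\lambda_\sigma$-orientation with identity permutation, each oriented invariant curve contributes a relation $M' \leqsb M$ -- either directly as an $L_2 \to I_2$ interchange satisfying conditions (1)--(4), or, in the case of a more complicated curve, as a composition of such interchanges produced by the intermediate fixed-point lattice on a compactification of the curve. Since $\leqgb$ is the transitive closure of these oriented-curve relations, it follows that $M' \leqgb M \Rightarrow M' \leqsb M$.

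The main obstacle I anticipate is the second direction: it is not a priori obvious that the Foster--Shou catalogue of invariant curves contains nothing beyond what is visible combinatorially as sequences of $L_2\to I_2$ interchanges of the covering type. In particular, the ``bounded'' one-parameter families of curves hinted at in Figure~\ref{fig:212} and the caveat that the bijection in that figure does not persist in general warn that some invariant curves span multiple covers of $\leqsb$ at once. Controlling these families, showing that each one decomposes (after specialization at its fixed-point strata) into a chain of genuine $L_2\to I_2$ covers satisfying (1)--(4), and ensuring the correctness of the $\lambda_\sigma$-orientation in each case, is where the combinatorial weight of the proof lies. The alternative proof via the geometric resolution of \cite{BottaRimanyi} circumvents this by deforming $\X(r,c)$ to a variety whose closure ordering among fixed points is combinatorially transparent, and then transporting the order along the resolution; this is the route I would take if the direct analysis of Foster--Shou curves proves too intricate.
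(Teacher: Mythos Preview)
Your outline tracks the paper's first proof closely, but the central claim --- a ``bijective match between the cover relations of $\leqsb$ and the oriented invariant curves'' --- is false, and the paper says so explicitly (see the sentence after the first proof and Figure~\ref{fig:DualGr(2,4)}). Invariant curves correspond to \emph{block swap moves} (Theorem~\ref{thm:classification}), not to $L_2\to I_2$ interchanges; a single indecomposable block swap can require many interchanges, and conversely an interchange need not be a cover. You acknowledge this tension in your ``obstacle'' paragraph, but your stated strategy depends on the bijection, so the plan as written does not close.

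What actually makes the first proof work is the combinatorics you have not supplied. For $\lessdot_{\hat B}\Rightarrow\prec_G$ only condition~(1) of Theorem~\ref{thm:cover} is used: it forces the two affected columns between rows $i$ and $j$ to form a \emph{minimal matched block}, which by Theorem~\ref{thm:classification} is exactly an indecomposable block swap, i.e.\ an invariant curve; conditions (2)--(4) play no role, and in particular they are not ``vanishing constraints needed for $\mu=0$ and stability''. For $\prec_G\Rightarrow\leqsb$ the paper runs an induction on the number of $(0,1)$ rows in the minimal matched block, peeling off the outermost $L_2\to I_2$ interchange and reducing to a smaller (possibly non-minimal) matched block. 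This induction is the substantive content and is absent from your proposal. Your fallback to the resolution argument is the paper's second proof, but there too the work lies in a concrete step you have not sketched: reducing to two D5 branes via tangent-weight analysis, then embedding in a partial flag variety and invoking Lemma~\ref{lm:attracting sets different torus actions projective variety} to pass between the $\At$- and $\widetilde\At$-orders.
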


The theorem implies that the geometric Bruhat order for another permutation $\sigma$ coincides with an appropriate modification of the combinatorial secondary Bruhat order---one where a $\sigma$-permutation of the columns is taken into account. 
In Sections \ref{sec:proof_invariant_curves} and \ref{sec:proof_resolution} we will give two different proofs for Theorem \ref{thm:main}.

\medskip

\noindent{\em Assumption.} In theory the vectors $r$ and $c$ can have 0 components. However, simply erasing those components effects none of our notions or statements. Hence, from now on in the whole paper we assume that $r$ and $c$ have no 0 components. 

\subsection{More on the geometric Bruhat order}

Since the union of all attracting sets in a bow variety is closed, cf. \cite[Lemma 5.12]{BottaRimanyi}, the geometric Bruhat order admits the following equivalent characterization: $f\leq_G g$ iff there exists a collection of $\At$-fixed points $f=f_0,f_1, \dots, f_{n-1}, f_n=g$ connected by a chain of attracting $\At$-orbits. Namely, if there exist $\At$-orbits $\Or_i\subseteq X(\DD)$ such that $f_i,f_{i-1}\in \overline{\Or_i}$ and 
\[
\lim_{t\to0} \lambda_{\sigma}(t)\cdot x=f_{i-1} \qquad \forall x\in \Or_i.
\]
In general, the dimension of these orbits can be up to $\text{rk}(\At)-1$\footnote{In principle, it could be up to $\text{rk}(\At)$, but the one dimensional torus consisting of elements the form $(a,\dots,a)$ acts trivially on the bow variety.} However, in Lemma \ref{lma:one dimensional orbits} we will show that we can restrict ourselves to closures of \emph{one-dimensional} orbits. Before proving this result, let us discuss its consequences.  By an easy adaptation of Lemma \ref{lemma curves in varieties}, it follows that one dimensional orbit closures admit the following evocative description: two distinct points $f,g\in X^{\At}$ belong to the closure $\overline \Or\subset X(\DD)$ of a one dimensional $\At$-equivariant orbit $\Or$ iff there exists an embedding $\gamma: \Pe^1\to \overline \Or$ whose restriction to $\Cs=\Pe^1\setminus \lbrace 0, \infty\rbrace$ is onto $\Or$ and such that $\gamma(0)=f$, $\gamma(\infty)=g$. In particular, this implies
\[
    \lim_{t\to 0} \lambda_\sigma(t)\cdot x= f \qquad \lim_{t\to \infty} \lambda_\sigma(t)\cdot x=  g
\]
for every $x\in \Or$. Hence, a compact one dimensional $\At$-orbit closure can be thought of as a $\Pe^1\subseteq X(\DD)$ preserved by the $\At$-action and whose poles $0,\infty\in \Pe^1$ are $\At$-fixed. We call such curves $\At$-invariant curves. For convenience, we always parametrize the domain in such a way that $\gamma(\infty)\leq_{G} \gamma(0)$. This in particular implies that the tangent weight at $0\in \Pe^1$ of an $\At$-invariant curve is always attractive, i.e. its pullback by $\lambda_\sigma$ is of the form $\lambda_{\sigma}^* \chi(t)=t^k$ for some $k>0$. 

\begin{remark}
    Not all one dimensional orbit closures are compact. For example, consider the bow variety $X(\ttt{\fs 1\fs 2\bs 1\bs })=\TsP^1$. The fibers $T_0^*\Pe^1$ and $T_\infty^*\Pe^1$ are one dimensional orbit closures, with boundaries given by the base-points $0,\infty\in \Pe^1$.
    A similar situation can be observed in Figure~\ref{fig:212}, where both compact and non-compact invariant curves are drawn. In particular, notice that non-compact invariant curves do not affect the geometric Bruhat order. This is indeed a general fact, which directly follows from the characterization of the geometric Bruhat order discussed above.
\end{remark}

We now characterize the geometric Bruhat order in term of one dimensional attracting curves.

\begin{lemma}
\label{lma:one dimensional orbits}
    Let $f,g\in X(\DD)^{\At}$. Then $f\leq_G g$ iff there exist $\At$-fixed points $f=f_0,f_1, \dots, f_n=g$ and a chain of $\At$-invariant curves $\gamma_i:\Pe^1\to X(\DD)$ such that $\gamma_i(0)=f_{i-1}$ and $\gamma_i(\infty)=f_i$ for all $i=1,\dots, n$
\end{lemma}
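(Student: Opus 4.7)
The plan is as follows. The \emph{if} direction is immediate. Each $\gamma_i$ restricts on $\Pe^1\setminus\{0,\infty\}$ to a one-dimensional $\At$-orbit $\Or_i$ whose tangent weight at $\gamma_i(0)=f_{i-1}$ is $\lambda_\sigma$-attractive by the orientation convention, so every point of $\Or_i$ flows under $\lambda_\sigma$ to $f_{i-1}$. Hence $\Or_i\subseteq \Attr_\sigma(f_{i-1})$ and $f_i=\gamma_i(\infty)\in \overline{\Or_i}\subseteq \overline{\Attr_\sigma(f_{i-1})}$, which gives $f_i\leqgb f_{i-1}$. Concatenating these relations over $i$ yields the desired relation in $\leqgb$.

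For the \emph{only if} direction I would start from the equivalent characterization of $\leqgb$ recalled immediately before the lemma: it already supplies a chain of $\At$-fixed points $h_0,\dots,h_N$ linked by attracting $\At$-orbits $\Or_j\subseteq \Attr_\sigma(h_{j-1})$ with $h_{j-1},h_j\in \overline{\Or_j}$, except that the orbits $\Or_j$ may have dimension larger than one. The whole content of the lemma therefore reduces to the following refinement statement $(\star)$: given fixed points $p,q$ and a single $\At$-orbit $\Or\subseteq \Attr_\sigma(p)$ with $q\in \overline\Or$, the points $p,q$ are connected by a chain of one-dimensional invariant curves oriented with $p$ at $0$.

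I would prove $(\star)$ by induction on $\dim\Or$. The base case $\dim\Or=1$ is immediate: $\overline\Or$ itself is a smooth $\Pe^1\subseteq X(\DD)$ with poles $p$ and $q$, correctly oriented because $\Or\subseteq \Attr_\sigma(p)$. For $\dim\Or>1$ I would use that $\overline\Or$ is an $\At$-toric variety (for the quotient of $\At$ by the stabilizer of $\Or$): the fan of a toric variety has the property that any two maximal cones are connected by a sequence of codimension-one cones, which translates into a chain of one-dimensional $\At$-orbit closures in $\overline\Or$ linking any two of its $\At$-fixed points, in particular $p$ and $q$. Genericity of $\lambda_\sigma$ (Remark~\ref{rem:generic cocharacter}) then guarantees that each such one-dimensional orbit is a bona fide invariant curve with a well-defined attractive orientation at $0$, which is what the lemma demands.

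The main difficulty, I expect, will be ensuring that each link in the chain is a \emph{compact} one-dimensional orbit closure, since, as noted in the remark preceding the lemma, non-compact ones play no role in $\leqgb$. Controlling compactness amounts to requiring the codimension-one cones used in the fan-chain to be flanked on both sides by maximal cones corresponding to honest $\At$-fixed points of $\overline\Or \subseteq X(\DD)$, rather than to orbits escaping to infinity; this is precisely where $p,q\in\overline\Or$ is exploited to keep the chain inside the bounded part of the fan. A cleaner alternative specific to our setting, and the fallback I would use if the toric bookkeeping becomes delicate, is to invoke the explicit classification of $\At$-invariant curves on bow varieties from \cite{FosterShou}, which would reduce $(\star)$ to a finite combinatorial verification on tie diagrams.
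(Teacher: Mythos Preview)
Your \emph{if} direction is fine (modulo the orientation convention in the lemma statement, which as written seems to produce $g\leqgb f$ rather than $f\leqgb g$; this is a labeling issue, not a problem with your reasoning).

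The \emph{only if} direction has a genuine gap. The step where you treat $\overline{\Or}$ as a toric variety and invoke fan combinatorics tacitly assumes two things you do not verify: that $\overline{\Or}$ is normal (so that it has a fan at all), and, more seriously, that it is \emph{complete} (so that every codimension-one cone in the path is flanked by two maximal cones, i.e.\ every link in the chain is a $\Pe^1$ rather than an $\mathbb{A}^1$). You flag the second issue yourself, but ``keep the chain inside the bounded part of the fan'' is not an argument: the mere presence of $p$ and $q$ in $\overline{\Or}$ does not force the intermediate one-dimensional orbit closures to be compact. The paper's Counterexample~4.5 is precisely this phenomenon: in $Y^0=\mathrm{Gr}(2,4)\setminus\{f_{13},f_{14},f_{23},f_{24}\}$ there is a $\Cs$-orbit whose closure contains the fixed points $f_{12}$ and $f_{34}$, yet no chain of $\At$-invariant $\Pe^1$'s connects them inside $Y^0$, because every such chain must pass through one of the removed points.

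The paper supplies the missing idea: bow varieties admit a proper $\Tt$-equivariant map $\pi:X(\DD)\to X_0$ to their affinization. Any $\Cs$-invariant curve $\gamma:\Pe^1\to X(\DD)$ composes with $\pi$ to a constant (since $X_0$ is affine), so $\gamma$ lands in a single fiber $Z=\pi^{-1}(x)$, which is complete by properness of $\pi$. Completeness of $Z$ is exactly what makes the degeneration go through; the paper then runs an inductive argument, filtering $\At$ by kernels of the finitely many attractive tangent weights and applying a rank-two graph-closure lemma (Lemma~\ref{lemma fixed  points rank two torus}) at each stage, rather than invoking global fan combinatorics. Your fallback to the curve classification of \cite{FosterShou} would work in principle, but it is far heavier than needed and would make the lemma logically posterior to Section~\ref{sec:proof_invariant_curves}, whereas the paper wants it as an independent structural input.
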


\begin{proof}
    To save on notation, set $X=X(\DD)$. Notice that the geometric Bruhat order only depends on $\At$ via the character $\lambda: \Cs\to \At$. Since the latter is generic in the sense of Remark \ref{rem:generic cocharacter}, it follows that $f\leq_G g$ iff $f$ is connected to $g$ via a chain of a $\Cs$-invariant curves. As a consequence, it suffices to show that an attractive $\Cs$-invariant curve $\gamma: \Pe^1\to X$ degenerates to a chain of attractive $\At$-invariant curves. As we will see in the proof, the existence of such degeneration requires some amount of compactness. Although bow varieties are non-compact in general, they are defined as GIT quotients of some affine variety with respect to the action of a reductive group. As a consequence, any bow variety $X$ comes with a canonical proper $\Tt$-equivariant map $\pi: X\to X_0=\text{Spec}(\Gamma(\mathcal{O}_{X}))$. This relative compactness will be sufficient for the proof. 
    
    Consider the composition $ \mathbb{P}^1\xrightarrow{\gamma} X\xrightarrow{\pi} X_0$. Since $X_0$ is affine, $\pi\circ \gamma$ is constant. Let $x\in X_0^{\At}$ be its image. Since $\pi$ is equivariant, the fiber $Z:=\pi^{-1}(x)$ is preserved by $\At$ and contains the image of $\gamma$. Moreover, since $\pi$ is proper, $Z$ is complete. As a result, to prove the lemma it suffices to show that the $\Cs$-invariant curve $\gamma: \Pe^1\to Z\subset X$ degenerates to a chain of attractive $\At$-invariant curves inside $Z$, which is complete.
    
    Let $\chi_1, \dots \chi_d$ be set of attractive tangent weights of $X^{\At}\subset X$. This set is really finite because the $X^{\At}$ is a finite set and is nonempty unless $X$ is zero dimensional, in which case the lemma is trivial.
    Set $\At_{d+1}:= \At$, $\At_k:=\cap_{j=k}^{d} \text{Ker}(\chi_j)\subseteq \At$ for all $k=1,\dots, d$ and choose splittings $\At_k=\At_{k-1}\times \At_k/\At_{k-1}$. Notice that $\At_1\subset\dots \subset \At_d\subset\At$, the tori $ \At_k/\At_{k-1}$ are one dimensional, and $\At_1$ acts trivially on $X$. The cocharacter $\lambda$ and the weight $\chi$ factor as
    \[
    \begin{tikzcd}
    \Cs\arrow[r, "\lambda"]\arrow[d, , swap, "\Delta"] & \At_{d+1}\\
    \Cs\times \Cs\arrow[r, swap, "\lambda'\times \lambda''"]&  \At_{d}\times \At_{d+1}/\At_d \arrow[u, equal]
    \end{tikzcd}
    \qquad 
    \begin{tikzcd}
        \At_{d+1}\arrow[r, "\chi_d"]\arrow[d] & \Cs\\
        \At_{d+1}/\At_d\arrow[ur]
    \end{tikzcd}
    \]
    Consider the induced action of $\Cs\times \Cs$ on $Z\subset X$ and fix a point $ t \in \Pe^1\setminus \lbrace 0,\infty\rbrace  $. Since $Z$ is complete, Lemma \ref{lemma fixed  points rank two torus} (i) provides a map 
    \[
    \Gamma: \Pe^1\times \Pe^1\xrightarrow[]{p^{-1}} \overline{\Graph(\gamma(t))}\to X
    \]
    such that $\gamma$ is recovered as the composition $\Pe^1 \xrightarrow{\Delta} \Pe^1\times \Pe^1\to X$.
    Moreover, since $\lambda$ is generic, the same holds for $\lambda'$ and $\lambda''$. As a consequence, Lemma \ref{lemma fixed  points rank two torus} (ii)-(iii) implies that $\gamma_d: \lbrace 0 \rbrace\times \Pe^1\to Z\subset X$ is fixed by $\At_d$ and its image is the closure of an $\At_{d+1}/\At_d$ orbit, which must be either constant or one dimensional with tangent weight $\chi_d$. Hence, combining the two diagrams above we conclude that $\gamma_d$ is either constant or an attractive $\At$-invariant curve (with tangent weight $\chi_d$ at $0\in \Pe^1$). Similarly, we see that the map $\gamma': \Pe^1\times \lbrace \infty\rbrace\to Z\subset X$ is either constant or an attractive $\Cs$-invariant curve for the $\Cs$-action induced by the character $\lambda': \Cs\to \At_d$.
    
    Overall, we have shown that we may replace the curve $\gamma$ with the concatenation of $\gamma_d$ and $\gamma'$. Iterating the argument for all pairs $(\At_k,\At_{k+1})$, we see that we may replace $\gamma$ by a chain of attracting orbits $\gamma_k: \Pe^1\to Z\subset X$ (with tangent weight $\chi_k$ at $0\in \Pe^1$) connecting $f$ to $g$\footnote{Notice that in general $n\leq d$ because some maps $\gamma_i: \Pe^1\to X$ might be constant, so we can neglect them.}. This proves the lemma. 

\end{proof}


\begin{remark}
     Since every $\At$-fixed point in a bow variety is also $\Tt$-fixed, cf. \cite[Section 4]{RimanyiShou}, the argument above actually shows that the $\Tt$-invariant curves are sufficient to characterize the geometric Bruhat order as in Lemma \ref{lma:one dimensional orbits}.
\end{remark}

 For completeness, we give a counterexample to stress that the argument above fails in absence of completeness.

\begin{counterexample}
 Consider the bow variety $X=\TsGr(2,4)$, and set $Y=\text{Gr}(2,4)$. The affinization map  $\pi: X\to X_0$  is the Springer resolution of a nilpotent orbit in $X_0\subset \mathfrak{gl}_4$ and all the fixed points $X^{\At}$ are contained in the zero fiber $\pi^{-1}(0)=Y$.

The $\At$-fixed points $Y^{\At}=X^{\At}$ are the coordinate planes $f_{ij}=\C_{a_i}\oplus \C_{a_j}\subset \C^4$ for $1\leq i<j\leq4$. Set $Y^0=Y\setminus \lbrace f_{13},f_{14}, f_{23}, f_{24} \rbrace$ and let $\lambda_{id}: \C^{\ast}\to \At$ be the character $t\mapsto (t,t^2,t^3,t^4)$. Unlike $Y$, $Y_0$ is not complete and hence the argument above fails. Indeed, it is easy to check that there exists a $\C^{\ast}$-invariant curve connecting $f_{12}$ to $f_{34}$ in $Y^0$ (and hence $f_{24}\in \overline{\text{Att}_{\sigma}(f_{12})}$ or vice versa) but no chain of $\At$-invariant curves. On the contrary, this chain of curves exists in $X$, and consists of two copies of $\mathbb{P}^1$ glued at a pole. In particular, the pole at the intersection of the two $\mathbb{P}^1$ in $X$ must be one of the fixed points $ \lbrace f_{13},f_{14}, f_{23}, f_{24}\rbrace $, which are not contained in $Y^0$.
\end{counterexample}

\smallskip

\section{The proof via combinatorial description of invariant curves}
\label{sec:proof_invariant_curves}

\subsection{Classification of Compact Invariant Curves}
\par For a bow variety $\Ch(\DD)$, the $\Tt$-invariant curves can be read directly from the BCTs of fixed points. First, we restate the classification from \cite{FosterShou} for the compact invariant curves. 

\begin{definition}
    Let $M\in\BCT(r,c)$. A submatrix $\overline{M}=\begin{pmatrix}
        m_{pq}
    \end{pmatrix}^{p=i,i+1,\dots,j}_{q=k,l}$ with only two columns is called a matched block if $\sum_{p=i}^{j}m_{pk}=\sum_{p=i}^{j}m_{pl}$. We call $\overline{M}$ a minimal matched block if it cannot be partitioned into two matched sub-blocks.
\end{definition}

\begin{definition}
    Let $M\in\BCT(r,c)$. A block swap move $\psi$ on $M$ consists of swapping the columns of a matched block $\overline{M}$ within $M$. If $\overline{M}$ is minimal, then $\psi$ is called indecomposable.
\end{definition}

Note that if a minimal matched block has top row equal to $(0,1)$ then the bottom row is $(1,0)$, and vice-versa. Moreover, every block swap move uniquely decomposes into simultaneous indecomposable block swap moves with minimal matched blocks separated by $(0,0)$ and $(1,1)$ rows. For example, the following matched block decomposes into three minimal matched blocks:
\[\begin{pmatrix}
0 & 1\\
1 & 0\\
\hline
1 & 1\\
\hline
1 & 0\\
1 & 0\\
0 & 1\\
1 & 1\\
0 & 1\\
\hline
0 & 0\\
\hline
1 & 0\\
0 & 1\\
\end{pmatrix}\text{.}\]

Before classifying invariant curves in $\Ch(\DD)$, it is important to note that bow varieties generally contain pencils of invariant curves, by which we mean a family of invariant curves whose tangent directions at a fixed point span one weight space. For an example, consider $(\mathbb{P}^1)^k$ with the diagonal action by $\Cs$. In fact, all compact invariant pencils in a bow variety are of this form~\cite{FosterShou}. Also pencils can occur non-compactly, as in Figure \ref{fig:212}, to no effect on $\leqgb$.

\begin{theorem} \cite{FosterShou} \label{thm:classification}
Let $\DD$ be a separated brane diagram with charge vectors $r$ and $c$.
\begin{enumerate}
    \item The block swap moves in $\BCT(r,c)$ are in bijection with the pencils of compact invariant curves in $\Ch(\DD)$.
    \item If a block swap move decomposes into $k$ simultaneous indecomposable block swap moves, then the associated pencil has dimension $k$. 
    \item For a block swap move from $M$ to $M^\prime$, let $i$ be the index of the topmost affected row. Let $(i,q_0)$ be the index of the $0$ that becomes a $1$ and let $(i,q_1)$ be the index of the $1$ that becomes a $0$. Then the associated pencil has tangent weight $\frac{a_{q_1}}{a_{q_0}}\hbar^{1+s_{iq_0}-s_{iq_1}}$ in $T_M\Ch(\DD)$, where $s_{pq} = \sum_{p^\prime=1}^p m_{p^\prime q}$.
\end{enumerate}
\end{theorem}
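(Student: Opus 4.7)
The plan is to work directly with the quiver description of $\Ch(\DD)$ using the explicit fixed-point representatives (``butterfly diagrams'') of \cite{RimanyiShou}. Without loss of generality I take $\DD$ to be separated, since Hanany-Witten furnishes an equivariant isomorphism on bow varieties. For each $M\in\BCT(r,c)$, the associated tie diagram pins down a concrete tuple $(A,B,B',a,b,C,D)\in \widetilde{\mathcal{M}}$ whose entries lie in $\{0,\pm 1\}$. I would then search for invariant curves by perturbing a controlled subset of these entries by a single parameter $t$, verifying that the moment map $\mu=0$ and the stability conditions (S1), (S2) are preserved for $t$ in a punctured neighborhood, and checking that the two endpoints $t=0, \infty$ recover the butterfly representatives of two fixed points.

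For the forward direction of (1), given a block swap move from $M$ to $M'$ whose minimal matched block sits on rows $i, i{+}1,\ldots,j$ and columns $k,l$, I would introduce a single parameter $t$ that interpolates the affected D3/D5 data attached to $\Ab_k, \Ab_l$ and the intervening D3 branes. The quiver relations $\mu = 0$ then reduce to polynomial identities in $t$ that I would verify using the matched-block condition $\sum_{p=i}^j m_{pk}=\sum_{p=i}^j m_{pl}$: this balance is precisely what makes the deformation close up to a $\Pe^1$ rather than escape. When a block swap decomposes into $k$ simultaneous indecomposable minimal pieces separated by $(0,0)$ and $(1,1)$ rows, each piece carries its own independent parameter $t_1,\ldots,t_k$, and the combinatorial disjointness makes the relations decouple. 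Passing to the GIT quotient exhibits the joint family as a $\Pe^{k-1}$-worth of lines through $M$, which is the pencil of dimension $k$ predicted in (2).

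The tangent weight in (3) would then be extracted from the infinitesimal action of $\At\times\Cs_\hbar$ on the parameter $t$. The $\At$-part $a_{q_1}/a_{q_0}$ is forced because the deformation genuinely mixes the column-$q_0$ and column-$q_1$ data of the butterfly. The $\hbar$-exponent $1+s_{iq_0}-s_{iq_1}$ is obtained by tracking which of the $\hbar$-weighted arrows in \eqref{eq: three way part} and \eqref{eq: two way part} the parameter must traverse; the partial column sums $s_{iq_0}, s_{iq_1}$ count, respectively, how many $1$'s appear in columns $q_0, q_1$ above row $i$, and these govern the net $\hbar$-weight picked up as the perturbation propagates. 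The constant $+1$ absorbs the $\hbar$-twist built into the moment map constraint.

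The main obstacle, I expect, is the converse direction in (1): showing that \emph{every} compact $\Tt$-invariant curve arises from a block swap. My plan is to compute the full tangent weight decomposition of $T_M\Ch(\DD)$ by equivariant localization on the symplectic reduction, and for each attractive weight ask whether the corresponding infinitesimal deformation integrates to a $\Pe^1$ whose second fixed point also lies in $\Ch(\DD)$. The combinatorial observation that integrability forces the column-sum balance defining matched blocks is the key; when the balance fails, the deformation escapes to a non-compact attracting direction of the kind illustrated by the fiber $T_0^*\Pe^1$ in the $\TsP^1$ remark above. Once compact tangent weights at $M$ are matched bijectively with block swap moves out of $M$, parts (1), (2) follow by counting independent parameters, and (3) is read off from the explicit construction.
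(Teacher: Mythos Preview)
The paper does not prove this theorem; it is imported wholesale from \cite{FosterShou}, and the only commentary is the Remark immediately following the statement, which says that the curves are constructed explicitly via ``butterfly surgeries'' in \cite{Shou} and that the completeness argument---that these are \emph{all} the compact invariant curves---is carried out in \cite{FosterShou}. Your proposal is a faithful outline of exactly this two-stage strategy: perturb the butterfly representatives of \cite{RimanyiShou} by a parameter to build the curves, then for the converse compute the tangent weight decomposition at each fixed point and determine which directions integrate to compact $\Pe^1$'s. So at the level of approach there is nothing to compare against in this paper, and your plan matches what the cited references do.

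That said, what you have written is a plan, not a proof. The converse you flag as the ``main obstacle'' is indeed where the substance lies, and your heuristic that ``integrability forces the column-sum balance defining matched blocks'' is the right slogan but hides a genuine case analysis: one must control the stability conditions (S1), (S2) along the deformation, verify that the GIT quotient really closes up to $(\Pe^1)^k$ rather than something singular or non-compact, and match every compact tangent direction to a block swap. None of those details appear here or in your sketch; they live in \cite{Shou, FosterShou}.
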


\begin{remark}
    These block swap moves contain the same data as ``butterfly surgeries'', which are the natural way to explicitly define these curves and were first described in \cite{Shou}. The verification that these are the only compact invariant curves is completed in \cite{FosterShou}.
\end{remark}



\subsection{First proof of Theorem \ref{thm:main}}
\label{subsection:first proof main thm}
With compact invariant curves now classified, we return to the geometric Bruhat order on fixed points with greater clarity. First note that we do not need pencils of dimension greater than $1$, as the indecomposable block swap moves are sufficient to generate the geometric order. 
Now define the relation $\prec_G$ such that $M^\prime\prec_G M$ if there exists an indecomposable block swap move from $M$ to $M^\prime$ with tangent weight $\frac{a_{N}}{a_{n}}\hbar^d$ with any $d\in\mathbb{Z}$ and $N>n$. In other words, $\prec_G$ corresponds to the relation on fixed points given by directed invariant curves. Note that $\prec_G$ does generate $\leq_G$ by transitive closure, but it is not a covering relation, as triangles may still appear in $\prec_G$, as in Figure \ref{fig:212}.

\begin{proof}
    Let $\lessdot_{\hat{B}}$ be the covering relation for $\leqsb$. We first show that if $M^\prime\lessdot_{\hat{B}}M$, then $M^\prime \prec_G M$. By Theorem \ref{thm:cover}, the $L_2\to I_2$ interchange on $M[\{i,j\},\{k,l\}]$ yields $M^\prime$, and with condition (1), $m_{pk}=m_{pl}$ for $i<p<j$. This means that $\overline{M}=\begin{pmatrix}
        m_{pq}
    \end{pmatrix}_{q=k,l}^{p=i,i+1,\dots,j}$ is a minimal matched block, as all the middle rows of $\overline{M}$ are either $(0,0)$ or $(1,1)$. By Theorem \ref{thm:classification}, since the $L_2\to I_2$ interchange has the same effect as the block swap move swapping the columns of $\overline{M}$, there is an invariant curve between $M^\prime$ and $M$. It has tangent weight $\frac{a_{l}}{a_{k}}\hbar^{1+s_{ik}-s_{il}}$. Since $l>k$, we have $M^\prime \prec_G M$.

    Next we show if $M^\prime \prec_G M$, then $M^\prime \leqsb M$. Let there be an indecomposable block swap move from $M$ to $M^\prime$ with tangent weight $\frac{a_{l}}{a_{k}}\hbar^d$ with $l>k$. Let this block swap move have minimal matched block $\overline{M}=\begin{pmatrix}
        m_{pq}
    \end{pmatrix}_{q=k,l}^{p=i,i+1,\dots,j}$. By the form of its tangent weight, Theorem \ref{thm:classification} says that the $i$th row is $(0,1)$. By minimality, the $j$th row is $(1,0)$. We seek a sequence of $L_2\to I_2$ interchanges to swap the columns of $\overline{M}$.
   
    We will induct on the number of $(0,1)$ rows in $\overline{M}$. If there is only $1$ row of the form $(0,1)$, then it is the top row, and only one $L_2\to I_2$ interchange is needed. If there are $n+1$ rows of the form $(0,1)$, then find the first row under the $i$th row that is either $(0,1)$ or $(1,0)$, and call its index $t$. Note that if row $t$ is $(1,0)$, then minimality is violated, as $\begin{pmatrix}
        m_{pq}
    \end{pmatrix}_{q=k,l}^{p=i,i+1,\dots,t}$ would be a minimal matched subblock. Thus, row $t$ is $(0,1)$ and all rows between $i$ and $t$ are $(0,0)$ or $(1,1)$. Similarly, there is a lowest row $t^\prime$ of the form $(1,0)$ that is above row $j$, and between $t^\prime$ and $j$ are only $(0,0)$ or $(1,1)$ rows. First, perform the $J_2\to I_2$ interchange on rows $i$ and $j$. The part between $t$ and $t^\prime$ is unchanged and has $n$ rows of $(0,1)$. 
    By inductive hypothesis, a sequence of $L_2\to I_2$ interchanges will do the swap $\begin{pmatrix}
        m_{pq}
    \end{pmatrix}_{q=k,l}^{p=t,t+1,\dots,t^\prime}\to \begin{pmatrix}
        m_{pq}
    \end{pmatrix}_{q=l,k}^{p=t,t+1,\dots ,t^\prime}$.
    Note that this matched block from rows $t$ to $t^\prime$ may not be minimal, in which case break it into minimal matched subblocks and apply the inductive hypothesis multiple times. We now have the total block swap move produced from $L_2\to I_2$ interchanges, so $M^\prime \leqsb M$.
    
    We have proven if $M^\prime\lessdot_{\hat{B}}M$, then $M^\prime \prec_G M$, so by transitive closure, if $M^\prime\leqsb M$, then $M^\prime \leqgb M$. We have proven if $M^\prime \prec_G M$, then $M^\prime \leqsb M$, so by transitive closure, if $M^\prime \leqgb M$, then $M^\prime \leqsb M$.
\end{proof}
Note that Theorem \ref{thm:main} is {\em not} a perfect matching of $L_2\to I_2$ interchanges and invariant curves. Although they do generate the same partial order, they are fundamentally different. We illustrate this phenomenon in Figure \ref{fig:DualGr(2,4)} for the example $\Ch(\ttt{\fs 1\fs 2\fs 3\fs 4\bs 2\bs})$, whose fixed point set is $\BCT((1,1,1,1),(2,2))$. This bow variety is also a Nakajima quiver variety
$\mathcal{N}\Big(\begin{tikzpicture}[baseline=-18pt,scale=.4]
\draw[thick] (0,-1) -- (3,-1); 
\draw[fill] (0,-1) circle (3pt);  
\draw[fill] (1.5,-1) circle (3pt);  
\draw[fill] (3,-1) circle (3pt);  
\draw[thick] (1.4,-2.1) rectangle (1.6,-1.9);
\draw[thick] (1.5,-1) -- (1.5,-1.9);
\node at (0,-.5) {$\scriptscriptstyle 1$};
\node at (1.5,-.5) {$\scriptscriptstyle 2$};
\node at (3,-.5) {$\scriptscriptstyle 1$};
\node at (2,-2) {$\scriptscriptstyle 2$};
\end{tikzpicture}\Big)$, 
cf.~\cite[Sect.~6]{RimanyiShou}
and is the ``3d mirror dual'' of $\TsGr(2,4)$, cf. \cite{BottaRimanyi}.

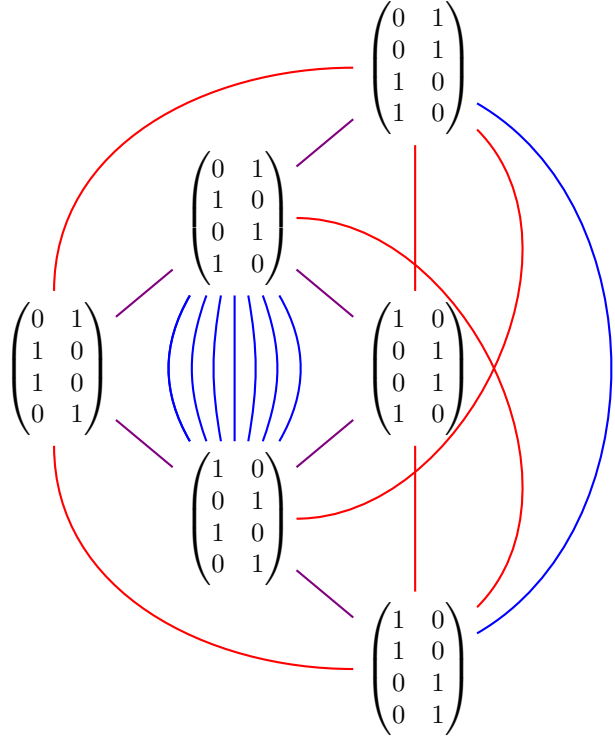
\begin{figure}
\centering
\begin{tikzpicture}[scale=0.8]
\node[] (top) at (3,5) { $\footnotesize\begin{pmatrix} 0&1\\ 0&1\\ 1&0\\ 1&0 \end{pmatrix}$};
\node[] (midup) at (0,2.5) { $\footnotesize \begin{pmatrix} 0&1\\ 1&0\\ 0&1\\ 1&0 \end{pmatrix}$};
\node[] (left) at (-3,0) { $\footnotesize \begin{pmatrix} 0&1\\ 1&0\\ 1&0\\ 0&1 \end{pmatrix}$};
\node[] (right) at (3,0) { $\footnotesize \begin{pmatrix} 1&0\\ 0&1\\ 0&1\\ 1&0 \end{pmatrix}$};
\node[] (middown) at (0,-2.5) { $\footnotesize \begin{pmatrix} 1&0\\ 0&1\\ 1&0\\ 0&1 \end{pmatrix}$};
\node[] (bottom) at (3,-5) { $\footnotesize \begin{pmatrix} 1&0\\ 1&0\\ 0&1\\ 0&1 \end{pmatrix}$};

\draw [thick,violet] (top)--(midup);
\draw [thick,violet] (left)--(midup);
\draw [thick,violet] (right)--(midup);
\draw [thick,violet] (left)--(middown);
\draw [thick,violet] (right)--(middown);
\draw [thick,violet] (bottom)--(middown);

\draw [thick,blue] (bottom) to [out=30,in=-30] (top);
\draw [thick,blue] (midup) to (middown);
\draw [thick,blue] (midup) to [out=-60,in=60] (middown);
\draw [thick,blue] (midup) to [out=-120,in=120] (middown);
\draw [thick,blue] (midup) to [out=-70,in=70] (middown);
\draw [thick,blue] (midup) to [out=-110,in=110] (middown);
\draw [thick,blue] (midup) to [out=-120,in=120] (middown);
\draw [thick,blue] (midup) to [out=-80,in=80] (middown);
\draw [thick,blue] (midup) to [out=-100,in=100] (middown);

\draw [thick,red] (top) to (right);
\draw [thick,red] (bottom) to (right);
\draw [thick,red] (top) to [out=180,in=90] (left);
\draw [thick,red] (bottom) to [out=180,in=-90] (left);
\draw [thick,red] (top) to [out=-45,in=0] (middown);
\draw [thick,red] (bottom) to [out=45,in=0] (midup);
\end{tikzpicture}
\caption{A graph with vertex set $\BCT((1,1,1,1),(2,2))$. A blue edge means a compact invariant curve connecting the corresponding fixed points on the bow variety, and the comb of blue edges indicate a 1-parameter family of such curves. A red edge indicates an  $L_2\to I_2$ interchange. A purple edge means we have both an invariant curve and an interchange.}
\label{fig:DualGr(2,4)}
\end{figure}

\section{The proof via brane-resolution} \label{sec:proof_resolution}

\subsection{Column resolutions for BCT tables}
Let $\r\in \N^m$ and $\c\in \N^n$ satisfying $\sum_{i=1}^m r_i=\sum_{j=1}^n c_j$. For any $k=1,\dots n$, let $\tilde c\in \N^{n+1}$ be the vector obtained from $c$ by replacing an entry $c_k \geq 2$ with a pair of consecutive entries $c_k',c_k''\in \N_{>0}$ such that $c_k'+c_k''=c_k$. We call this operation a resolution of the charge vector $c$. Let $\BCT(r,\tilde c,c)\subset\BCT(r,\tilde c)$ the subset consisting of those $\widetilde M=(\tilde m_{ij})\in \BCT(r,\tilde c)$ such that $\widetilde m_{ik}+\widetilde m_{ik+1}\in \lbrace 0,1\rbrace$ for all $i=1,\dots, m$. There is a well defined map
\[
q: \BCT(r,\tilde c,c)\to \BCT(r,c),\qquad \widetilde M \to q(\widetilde M)
\]
sending a matrix $\widetilde M=(\widetilde m_{ij})$ to 
\[
q(\widetilde M)_{ij}=\begin{cases}
    \widetilde m_{ij} & \text{if $j<k$}\\
    \widetilde m_{ik}+ \widetilde m_{ik+1}& \text{if $j= k$}\\
    \widetilde m_{ij+1} & \text{if $j> k$}
\end{cases}
\]
Thus, the map $q$ merges the $k$-th and $(k+1)$-th columns. Because of the defining condition of $\BCT(r,\tilde c,c)$, this operation really produces a BCT table for $\BCT(r,c)$. 

The fiber $q^{-1}(M)$ is in one to one correspondence with the $S_{c_k}/(S_{c'_k}\times S_{c_k''})$ ways of producing a BCT table $\widetilde M\in \BCT(r,c)$ by replacing the $k$-th column $M_{\bullet k}$ of $M$ with two adjacent columns that sum to $M_{\bullet k}$. This observation motivates the following definition:

\begin{definition}
    Let $M\in \BCT(r,c)$. The elements $\widetilde M\in q^{-1}(M)\subset \BCT(r,\tilde c,c)$ are called resolutions of the $k$-th column of $M$. 
\end{definition}

\begin{example} Let $M\in \BCT(r,c)$ be of the form
    \[
    M=\left(
    \begin{array}{C|c|C}
      & 1 & \\
      \mbox{$\varheart$} &0 &\mbox{$\vardiamond$} \\
      & 1 & \\
      & 1 & \\
    \end{array}
    \right).
    \]
    The central column is the $k$-th column and the placeholders $\varheart$ and $\vardiamond$ are put in place of the other columns. Notice that $c_k=3$. Let $\tilde c$ be the resolution of $c$ such that $ c'_k=2$ and $ c''_k=1$. Then the set $q^{-1}(M)\subset \BCT(r, \tilde c)$ consists of the three column resolutions 
    \[
    \widetilde M_1=\left(
    \begin{array}{C|cc|C}
     &1&0& \\
      \mbox{$\varheart$} &0&0 &\mbox{$\vardiamond$} \\
     &1&0& \\
     &0&1& \\
    \end{array}
    \right)
    \qquad 
    \widetilde M_2=\left(
    \begin{array}{C|cc|C}
     &1&0& \\
     \mbox{$\varheart$} &0&0 &\mbox{$\vardiamond$} \\
     &0&1& \\
     &1&0& \\
    \end{array}
    \right)
    \qquad 
    \widetilde M_3=\left(
    \begin{array}{C|cc|C}
     &0&1& \\
     \mbox{$\varheart$} &0&0 &\mbox{$\vardiamond$} \\
     &1&0& \\
     &1&0& \\
    \end{array}
    \right).
    \]
    
\end{example}

The following lemma, which directly follows from the definitions, shows that the BCT resolutions introduced above are compatible with the secondary Bruhat order. 

\begin{lemma}
\label{lma: Brhuaht order and resolutions}
    Let $M_1,M_2\in \BCT(r,c)$. Then $M_1 \geqsb M_2$ if and only if for every resolution $\widetilde M_1\in q^{-1}(M_1)\subset \BCT(r,\tilde c,c)$ there exists $\widetilde M_2\in q^{-1}(M_2)$ such that $\widetilde M_1\geqsb \widetilde M_2$.
\end{lemma}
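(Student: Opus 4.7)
The plan is to prove both implications by induction on the length of a chain of $L_2\to I_2$ covers in $\leqsb$, based on a single-step analysis of how such a cover interacts with the merge map $q$. The central observation is that an $L_2\to I_2$ interchange on $\widetilde N\in\BCT(r,\tilde c)$ at rows $(i,j)$ and split columns $(p,q)$ falls into three classes according to $|\{p,q\}\cap\{k,k+1\}|$: Class~0 (intersection empty) leaves the split columns untouched, so if $\widetilde N\in\BCT(r,\tilde c,c)$ then so is the result and the step descends under $q$ to an $L_2\to I_2$ step on the merged matrix; Class~1 ($\{p,q\}=\{k,k+1\}$) preserves the row sums in split columns (both are $1$ before and after), so the step is invisible to $q$; Class~2 (exactly one of $p,q$ in $\{k,k+1\}$) may or may not preserve $\BCT(r,\tilde c,c)$, but when it does, it descends to an $L_2\to I_2$ step on $q$.

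For the forward direction, I would induct on the length of a $\leqsb$-chain from $M_2$ to $M_1$. It suffices to show that for any cover $M_1'\lessdot_{\hat B}M_1$, realized by an $L_2\to I_2$ interchange at rows $(i,j)$ and original columns $(p,q)$, and any resolution $\widetilde M_1\in q^{-1}(M_1)$, there exists $\widetilde M_1'\in q^{-1}(M_1')$ with $\widetilde M_1\geqsb\widetilde M_1'$ realized by a single cover in $\BCT(r,\tilde c)$. If neither $p$ nor $q$ equals $k$, performing the same interchange on $\widetilde M_1$ at the corresponding split columns works (Class~0). If $p=k$, then the defining property of $\BCT(r,\tilde c,c)$ forces the row $i$ entries in split columns $k,k+1$ of $\widetilde M_1$ to both equal $0$ and exactly one of the row $j$ entries to equal $1$; denoting its column by $k^*$, the step at rows $(i,j)$ and split columns $(k^*,q')$, with $q'$ the split-column index of $q$, is a valid Class~2 move that stays in $\BCT(r,\tilde c,c)$ and projects under $q$ to $M_1'$. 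The case $q=k$ is symmetric.

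For the backward direction, pick any $\widetilde M_1\in q^{-1}(M_1)$ and use the hypothesis to obtain $\widetilde M_2\in q^{-1}(M_2)$ together with a chain of covers $\widetilde M_1=N_0,\,N_1,\ldots,N_\ell=\widetilde M_2$ in $\BCT(r,\tilde c)$. The strategy is to reorganize this chain so that every $N_i$ lies in $\BCT(r,\tilde c,c)$; projecting through $q$ then yields a chain from $M_1$ to $M_2$ consisting of $L_2\to I_2$ steps (from Classes~0 and~2) and equalities (from Class~1), giving $M_1\geqsb M_2$. Whenever a Class~2 step at rows $(i,j)$ and split columns $(k_0,q)$ would take $N_i\in\BCT(r,\tilde c,c)$ to $N_{i+1}\notin\BCT(r,\tilde c,c)$ by creating a row with both split-column entries equal to $1$, one inserts a preceding Class~1 move at rows $(i,i')$ and split columns $(k,k+1)$ for a suitably chosen partner row $i'$; the Class~2 step can then be redone at the other split column $k_0'$ and remains inside $\BCT(r,\tilde c,c)$.

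The main obstacle is the combinatorial bookkeeping of the rerouting in the backward direction: one must verify that a suitable partner row $i'$ always exists and that the modifications propagate coherently through the rest of the chain. An alternative approach is a direct induction on chain length that uses the Brualdi--Deaett cover characterization (Theorem~\ref{thm:cover}) to analyze the four conditions on the surrounding row/column pattern, either pushing each cover through $q$ or absorbing it into a Class~1 equivalence between resolutions of the same element of $\BCT(r,c)$.
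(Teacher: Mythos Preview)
The paper provides no proof of this lemma, simply asserting that it ``directly follows from the definitions.'' Your forward-direction argument---lifting each $L_2\to I_2$ interchange in $\BCT(r,c)$ to one in $\BCT(r,\tilde c,c)$ via the three-case analysis on whether the affected columns meet $\{k,k+1\}$---is correct and is almost certainly what the authors have in mind; once the case split is written out as you have done, this part is indeed routine.

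For the backward direction, your caution is well placed, and in fact your outline engages more seriously with the statement than the paper itself does. The naive idea of projecting a chain $\widetilde M_1=N_0\to\cdots\to N_\ell=\widetilde M_2$ through the merge map runs into exactly the obstruction you identify: intermediate $N_i$ need not lie in $\BCT(r,\tilde c,c)$, so $q(N_i)$ is undefined. One can extend $q$ to integer matrices and observe that each projected step either does nothing (Class~1) or adds $I_2-L_2$ at some $2\times 2$ block with increasing column indices; but summing these contributions only yields $\Sigma_{M_1}\leq \Sigma_{M_2}$, i.e.\ $M_1\geqb M_2$ in the \emph{primary} combinatorial Bruhat order---strictly weaker than the required $M_1\geqsb M_2$. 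So some device like your rerouting strategy (inserting Class~1 moves to keep the chain inside $\BCT(r,\tilde c,c)$) is genuinely needed, and the bookkeeping you flag---existence of a partner row, coherent propagation through the remainder of the chain, and the asymmetry between the cases $p=k$ versus $p=k+1$ for the inserted Class~1 move---is the real content. The paper's claim that this direction is immediate seems optimistic; your outline is the right shape, though the details remain to be filled in. Note also that the lemma is not invoked in the second proof of the main theorem, so its role in the paper is motivational rather than logical.
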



\subsection{D5 resolutions}
Throughout this section, we assume that all bow varieties are separated, i.e. their brane diagrams are of the form $\DD=\ttt{\fs $d_1$\fs $d_2$\fs $\dots$\fs $d_m=h_n$\bs $\dots$\bs $h_{2}$\bs $h_1$\bs }$. The combinatorics of the dimension vectors implies that 
\[
d_k=\sum_{i=1}^k r_i\qquad h_l=\sum_{i=n-l+1}^n c_i
\]
for all $k=1\dots, m$ and $l=1,\dots, n$. 
In particular, constraint $d_m=h_n$ is exactly the one entering in the definition of the set $\BCT(r,c)$.


Let $\DD(r,c)$ be a separated brane diagram with charges $r\in \N^m$ and $c\in \N^n$ and let $\DD(r,\tilde c)$ be the diagram obtained by replacing the charge vector $c$ with a resolution $\tilde c$ of its $k$-th entry. In other words, the diagram $\DD(r,\tilde c)$ is obtained via the local surgery
\begin{center}
\begin{tikzpicture}[scale=.7]
\draw[thick] (-1,0) -- (1,0);
\draw[thick, blue] (0.2,-0.5) -- (-0.2,0.5);
\node[label={\small $c'_k+c''_k$}] at (-0.2,0.5) {};

\draw[stealth-stealth, thick] (2,0) -- (3,0);

\draw[thick] (4,0) -- (7,0);
\draw[thick, blue] (5.2,-0.5) -- (4.8,0.5);
\draw[thick, blue] (6.2,-0.5) -- (5.8,0.5);
\node[label={\small $c'_k$}] at (4.8,0.5) {};
\node[label={\small $c''_k$}] at (5.8,0.5) {};

\end{tikzpicture}
\end{center}
We call this local modification of the brane diagram a D5 brane resolution.

Let $\X(r,c)$ and $ \Ch(r,\tilde c)$ be the bow varieties associated with $\DD(r,c)$ and $\DD(r,\tilde c)$ respectively. Notice that, according to our conventions, $\X(r,c)$ is acted on by $\At=(\Cs)^{n}$ while $X(r,\tilde c )$ is acted on by $\widetilde \At=(\Cs)^{n+1}$. Throughout this section, we regard $\At$ as a subtorus of $\widetilde \At$ via the embedding 
\[
\varphi: \At\hookrightarrow \widetilde \At \qquad (a_1,\dots, a_k, \dots, a_n)\mapsto (a_1,\dots, a_k, a_k, \dots, a_n)
\]
and consider its induced action on $X(r,\tilde c)$.
\begin{theorem}{\cite[Section 6.1]{BottaRimanyi}}
\label{theorem:geometric resolution}
There exists an $\At$-equivariant embedding $j:\X(r,c)\hookrightarrow  X(r,\tilde c)$. Moreover, for any fixed point $f\in \X(r,c)^{\At}$ the $\At$-fixed component $F\subseteq X(r,\tilde c )^{\At}$ such that $j(f)\in F$ is isomorphic to the bow variety with brane diagram
\[
\ttt{\fs $1$\fs $2$\fs $3$\fs $\dots$\fs $c'_k+c''_k$\bs $c''_k$\bs}
\]
\end{theorem}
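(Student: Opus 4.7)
The plan is to construct $j$ explicitly at the level of quiver data, then identify the $\At$-fixed components using the general principle that fixed loci of subtori acting on bow varieties are disjoint unions of bow varieties.

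\textbf{Construction of $j$.} The brane diagrams $\DD(r,c)$ and $\DD(r,\tilde c)$ differ only locally at the $k$-th D5 brane: in $\DD(r,\tilde c)$ the single D5 brane $\Ab_k$ carrying $\C_{\Ab_k}$ is replaced by two D5 branes $\Ab'_k,\Ab''_k$ carrying $\C_{\Ab'_k},\C_{\Ab''_k}$, with an extra D3 brane $\Xb^{\text{new}}$ in between. Starting from a point in $\widetilde{\mathcal{M}}(r,c)$, I would identify $\C_{\Ab_k}\cong \C_{\Ab'_k}$ via $\varphi$, embed $W_{\Ab_k^+}$ canonically into a $W_{\Xb^{\text{new}}}$ of the correct dimension via an additional summand $\C^{c''_k}$ related to $\C_{\Ab''_k}$, and define the two new three-way parts as the natural splittings of $A_{\Ab_k},a_{\Ab_k},b_{\Ab_k},B_{\Ab_k},B'_{\Ab_k}$ in which the $\Ab'_k$-part inherits the original data and the $\Ab''_k$-part is a trivial ``standard'' augmentation. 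I would then verify that (i) the moment map equations at $\Ab'_k,\Ab''_k,\Xb^{\text{new}}$ follow from the equation at $\Ab_k$; (ii) the stability conditions (S1)--(S2) transfer from $\Ab_k$ to $\Ab'_k,\Ab''_k$; (iii) the construction is $\At$-equivariant via $\varphi$; and (iv) it descends to a closed embedding on GIT quotients.

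\textbf{Identification of the fixed component.} Since $\At\subset \widetilde\At$ has corank one, $X(r,\tilde c)^\At$ coincides with the fixed locus of the one-dimensional subtorus $S=\widetilde\At/\At\cong \Cs$ acting with opposite weights on $\C_{\Ab'_k}$ and $\C_{\Ab''_k}$. I would invoke the general subtorus-fixed-locus principle for bow varieties: connected components of $X(r,\tilde c)^\At$ are themselves bow varieties whose brane diagrams can be read off the $S$-weight-$0$ subspaces of the tangent spaces at $\widetilde\At$-fixed points, using the explicit tangent weight formulas of \cite{RimanyiShou}. To identify the component $F\ni j(f)$, I would count its $\widetilde\At$-fixed points and show they match $\BCT((1^{c_k}),(c'_k,c''_k))$, in bijection with the fiber $q^{-1}(f)$ of the column-merging map (consistent with Lemma \ref{lma: Brhuaht order and resolutions}); then read off the local NS5/D5 charges as $(1^{c_k})$ and $(c'_k,c''_k)$ and match the local dimension. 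This pins down the brane diagram of $F$ as $\ttt{\fs 1\fs 2\fs \dots\fs c'_k+c''_k\bs c''_k\bs}$. The fact that this answer depends on $f$ only up to isomorphism reflects the purely local nature of the resolution, whose combinatorics sees only $c_k=c'_k+c''_k$.

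\textbf{Main obstacle.} The most delicate part is verifying that $j$ descends to a well-defined closed embedding on the GIT quotients: enlarging the quiver by $\Xb^{\text{new}}$ enlarges the gauge group by $\GL(W_{\Xb^{\text{new}}})$, and one must match orbits of the enlarged group on the enlarged data with orbits of the original group, compatibly with semistability with respect to the modified character $\chi$. Once this is established, the identification of $F$ as the claimed small bow variety is essentially a combinatorial bookkeeping, carried out via the subtorus fixed-locus principle and the tangent-weight calculus at fixed points.
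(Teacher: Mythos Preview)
The paper does not prove this theorem: it is stated with an explicit citation to \cite[Section 6.1]{BottaRimanyi} and is used as a black box in the second proof of Theorem~\ref{thm:main}. There is therefore no proof in this paper to compare your proposal against; to assess your outline properly you would need to consult the argument in \cite{BottaRimanyi}.

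That said, a few comments on your sketch. The overall architecture---build $j$ by local surgery on the quiver data at the $k$-th D5 brane, then invoke the ``subtorus fixed loci of bow varieties are bow varieties'' principle (which in this paper is \cite[Theorem~3.1]{BottaRimanyi}) to identify $F$---is indeed the natural strategy and is consistent with how the paper uses the result. Your identification of the combinatorics of $F^{\widetilde\At}$ with $q^{-1}(f)$ matches exactly the paper's points (i)--(iii) following the theorem. One small slip: you write ``the one-dimensional subtorus $S=\widetilde\At/\At$,'' but $\widetilde\At/\At$ is a quotient, not a subtorus, and $X(r,\tilde c)^{\At}$ is the fixed locus of the corank-one \emph{subtorus} $\varphi(\At)\subset\widetilde\At$, not of a rank-one subtorus. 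The correct statement is that, after quotienting by the trivially acting diagonal, $\varphi(\At)$ has corank one in the effective torus, so its fixed locus is governed by a single weight condition $a'_k=a''_k$; this is what makes the fixed-locus principle from \cite{BottaRimanyi} apply with a two-D5-brane output. Your ``main obstacle'' paragraph correctly locates the genuine work: matching $\GL(W_{\Xb^{\text{new}}})$-orbits and semistability is exactly the content that \cite[Section~6.1]{BottaRimanyi} must supply, and your sketch does not (and cannot, at this level of detail) discharge it.
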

This theorem encodes all the data entering in the combinatorics described in the previous section. Indeed, we have:
\begin{enumerate}
    \item[(i)] the $\At$-fixed points in $\X(r,c)$ are in one to one correspondence with the set $\BCT(r,c)$,
    \item[(ii)] the $\widetilde \At$-fixed points in $X(r,\tilde c)$ are in one to one correspondence with the set $\BCT(r,\tilde c)$,
    \item[(iii)] for any $M\in \BCT(r,c)$ corresponding to some $f\in \X(r,c)^{\At}$, the set $ F^{\widetilde \At}$ is in one to one correspondence with the fiber $q^{-1}(M)\subset \BCT(r,\tilde c)$.
\end{enumerate}

\subsection{Maximal resolutions}

For any $d\in \N$ such that $d=\sum_{k=1}^m  r_k$, let $\Fl(r,d)$ be the variety parametrizing quotients\footnote{The choice of considering quotients rather than inclusions is due to our choice of stability condition on bow varieties, see \cite{RimanyiShou, BottaRimanyi}.}
\[
{\C}^{d}=V_m\twoheadrightarrow V_{m-1} \twoheadrightarrow\dots \twoheadrightarrow V_{1}\twoheadrightarrow V_{0}=0
\]
where $\dim(V_k)=d_k=\sum_{i=1}^k r_k$. 
In particular, for  $1^d=(1,1,\dots, 1)\in \N^d$ we get the full flag variety $\Fl(1^d, d)$. 
The maximal torus $\widetilde \At\subset \GL(d)$ naturally acts on $\Fl(r,d)$ and hence also on its cotangent bundle $\TsFl(r,d)$. For a given $c\in \N^n$ such that $\sum_{i=1}^m c_i=\sum_{i=1}^m r_i$, let $\At=(\Cs)^n\subset\widetilde \At$ be the subtorus acting on $V_m$ with weight decomposition $V_m={\C}^{d_m}= a_1{\C}^{c_1}\oplus \dots \oplus a_n {\C}^{c_n}$. 

Applying Theorem \ref{theorem:geometric resolution} $c_i$ times for every D5 brane in $\X(r,c)$, we get
\begin{corollary}
\label{cor: maximal geometric resolution}
There exists an $\At$-equivariant embedding $\X(r,c)\hookrightarrow \TsFl(r,d)$.  Moreover, for any fixed point $f\in \X(r,c)^{\At}$ the $\At$-fixed component $F\subseteq \TsFl(r,d)^{\At}$ such that $j(f)\in F$ is the $n$-fold fiber product 
\begin{equation}
    \label{eq:corollary embedding in flags}
    F= \TsFl(1^{c_1}, c_1)\times \dots \times \TsFl(1^{c_n}, c_n).
\end{equation}
\end{corollary}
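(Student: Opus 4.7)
The plan is to iterate Theorem \ref{theorem:geometric resolution} to split every D5 brane of charge $\geq 2$ into branes of charge $1$. Starting from $c=(c_1,\dots,c_n)$, I will perform $d-n$ successive resolutions, each splitting some component $c_i\geq 2$ as $1+(c_i-1)$. This produces a chain of $\At$-equivariant embeddings $X(r,c)\hookrightarrow X(r,c^{(1)})\hookrightarrow\dots\hookrightarrow X(r,1^d)$ whose composition gives the desired $j$. The terminal bow variety $X(r,1^d)$ has all D5 charges equal to $1$, so by Example \ref{ex:c=1} it is the cotangent bundle $\TsFl(r,d)$. Tracking the tori through the chain of embeddings $\varphi$ from Theorem \ref{theorem:geometric resolution}, the original torus $\At=(\Cs)^n$ sits inside $(\Cs)^d$ as the subtorus where, for each $i$, the $c_i$ coordinates produced by the splittings of $c_i$ coincide; this realizes precisely the weight decomposition $V_m=a_1\C^{c_1}\oplus\dots\oplus a_n\C^{c_n}$ specified in the statement.

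For the fixed-component claim, I will invoke the general fact that for a smooth $\At$-variety $Y$ one has $(T^*Y)^\At=T^*(Y^\At)$, so it suffices to identify the $\At$-fixed component of $\Fl(r,d)$ through the base of $j(f)$. An $\At$-fixed partial flag admits a weight decomposition $V_{d_k}=\bigoplus_i V_{d_k}^{(i)}$ with $V_{d_k}^{(i)}\subseteq a_i\C^{c_i}$, and each connected component of $\Fl(r,d)^\At$ is a product $\prod_i \Fl(\mathrm{jumps}_i,c_i)$ indexed by the dimension matrix $u_{k,i}=\dim V_{d_k}^{(i)}$, where $\mathrm{jumps}_i$ records the positive increments of $k\mapsto u_{k,i}$.

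The key computation is that, for $f$ corresponding to the BCT $M=(m_{k,i})$, the dimension matrix for the component through $j(f)$ is $u_{k,i}=\sum_{k'\leq k} m_{k',i}$. Since $m_{k,i}\in\{0,1\}$ and each column of $M$ has sum $c_i$, the jumps $u_{k,i}-u_{k-1,i}=m_{k,i}$ take values in $\{0,1\}$ and sum to $c_i$ over $k$. After discarding trivial jumps, the induced flag inside $a_i\C^{c_i}$ is complete, so the $\At$-fixed component through $j(f)$ in $\Fl(r,d)$ is $\prod_i\Fl(1^{c_i},c_i)$, and its cotangent bundle gives the desired $\prod_i \TsFl(1^{c_i},c_i)$. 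The main obstacle will be verifying the formula $u_{k,i}=\sum_{k'\leq k}m_{k',i}$ by carefully tracing the bijection between fixed points and BCTs through the composed resolutions; this step is combinatorially transparent but requires care to match the indexing conventions with the explicit construction underlying Theorem \ref{theorem:geometric resolution}.
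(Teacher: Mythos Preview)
Your proposal is correct and follows the paper's route: iterate Theorem~\ref{theorem:geometric resolution} to obtain the $\At$-equivariant chain of embeddings terminating in $X(r,1^d)=\TsFl(r,d)$. The paper's own proof is literally the one line ``Applying Theorem~\ref{theorem:geometric resolution} $c_i$ times for every D5 brane'', so your expansion of the embedding part matches exactly.

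Where you diverge slightly is in the identification of the fixed component $F$. You propose to compute $\Fl(r,d)^{\At}$ directly via the weight decomposition, locate $j(f)$ by the dimension matrix $u_{k,i}=\sum_{k'\le k}m_{k',i}$, and then read off that the jumps are in $\{0,1\}$. This is valid, but your anticipated ``main obstacle''---tracing the formula for $u_{k,i}$ through the explicit construction underlying Theorem~\ref{theorem:geometric resolution}---is heavier than necessary. The paper sidesteps this by iterating not only the embedding statement of Theorem~\ref{theorem:geometric resolution} but also the accompanying combinatorial observation (iii): at each resolution step the $\widetilde\At$-fixed points of the component $F$ through $j(f)$ are exactly the column resolutions $q^{-1}(M)$. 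Iterating to the maximal resolution, $F^{\widetilde\At}$ consists precisely of the maximal resolutions of $M$; since the entries of $M$ lie in $\{0,1\}$, each column of $M$ resolves into $c_i$ columns each carrying a single $1$, which forces each factor of $F$ to be the full flag variety $\TsFl(1^{c_i},c_i)$. That gives the product decomposition without ever opening up the explicit geometric construction of $j$. Your approach buys a self-contained description inside $\Fl(r,d)$; the paper's buys brevity by packaging everything into the iteration of the theorem and its combinatorial shadow.
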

\begin{definition}
\label{def:maximal resolutions}
    Let $f\in \X(r,c)^{\At}$ be a fixed point corresponding to an element $M\in \BCT(r,c)$. The associated fixed points $F^{\widetilde \At}\subset X^{\widetilde \At}$ are called maximal resolutions of $f$. Similarly, the BCT tables corresponding to the fixed points $F^{\widetilde \At}\subset X^{\widetilde \At}$ are called maximal resolutions of $M$.
\end{definition}
The combinatorial interpretation of this corollary is a straightforward generalization of the points (i)-(iii) above. Each term in the $n$-fold fiber product \eqref{eq:corollary embedding in flags} corresponds to a column of $M$, and the $c_i!$ fixed points in $\TsFl(1^{c_i}, c_i)$ correspond to all possible ways of ``resolving'' the $i$-th column of $M$ with $c_i$ columns with exactly one nonzero entry. Hence, altogether, the maximal resolutions $\tilde f \in F^{\widetilde \At}$ correspond to the $\prod_{i=1}^{n} c_i!$ ways of ``maximally resolving'' $M$. The resulting matrices $\widetilde M\in \BCT(r,1^d)$ are exactly the fixed points in $\TsFl(r,d)$.

\begin{example}
    Consider the bow variety $\X(r,c)=\ttt{\fs1\fs2\fs4\bs2\bs}$. Then, $r=(1,1,2)$ and $c=(2,2)$, and two fixed points in $\X(r,c)^{\At}$ are described by the BCTs
    \[
M=\begin{pmatrix}
1&0\\
0&1\\
1&1\\
\end{pmatrix}
\qquad 
N=\begin{pmatrix}
0&1\\
1&0\\
1&1\\
\end{pmatrix}.
    \]
Each BCT admits four maximal resolutions:
\begin{align*}
M_1&=\left(
\begin{array}{cc|cc}
1&0&0&0\\
0&0&1&0\\
0&1&0&1\\
\end{array}
\right)
\; 
M_2&=\left(
\begin{array}{cc|cc}
0&1&0&0\\
0&0&1&0\\
1&0&0&1\\
\end{array}
\right)
\; 
M_3&=\left(
\begin{array}{cc|cc}
1&0&0&0\\
0&0&0&1\\
0&1&1&0\\
\end{array}
\right)
\; 
M_4&=\left(
\begin{array}{cc|cc}
0&1&0&0\\
0&0&0&1\\
1&0&1&0\\
\end{array}
\right)
\\
N_1&=\left(
\begin{array}{cc|cc}
0&1&1&0\\
1&0&0&0\\
0&0&0&1\\
\end{array}
\right)
\; 
N_2&=\left(
\begin{array}{cc|cc}
1&0&1&0\\
0&1&0&0\\
0&0&0&1\\
\end{array}
\right)
\; 
N_3&=\left(
\begin{array}{cc|cc}
0&1&0&1\\
1&0&0&0\\
0&0&1&0\\
\end{array}
\right)
\; 
N_4&=\left(
\begin{array}{cc|cc}
1&0&0&1\\
0&1&0&0\\
0&0&1&0\\
\end{array}
\right)
\end{align*}
\end{example}

The case when $X$ has exactly two D5 branes, and hence its BCTs have two rows, will be the most relevant for us. As a consequence, we invite the reader to keep in mind the example above.

\subsection{Second proof of Theorem \ref{thm:main}}
\label{section: second proof main thm}

Theorem \ref{thm:classification} provides an explicit description of all invariant curves in a bow variety  $\X(r,c)$ in terms of certain combinatorial operations for BCT tables and, hence, opens the way to our first proof of Theorem \ref{thm:main}. However, the results introduced in the previous section suggest an alternative approach to its proof. Indeed, instead of classifying all the invariant curves, we can exploit Corollary \ref{cor: maximal geometric resolution} to control them in terms of the invariant curves in a partial flag variety $\Fl(r,d)$. As we will see, this will be the key idea behind our second proof of the main theorem.

We begin with two technical lemmas. The first one characterizes the secondary Bruhat order for BCT tables with two columns, and the second one provides a geometric analog of Lemma~\ref{lma: Brhuaht order and resolutions}.


\begin{lemma}
\label{lma: 2-columns characterization}
    Let $M,M'$ be two BCTs with $n$ rows and exactly two columns. Then $M'\leqsb M $ if and only if $M'=M+Z$ where $Z\in \Mat(n,2)$ such that
    \begin{enumerate}
        \item[(i)] each row of $Z$ is either $(0,0)$, $(1,-1)$ or $(-1,1)$.
        \item[(ii)] $\# \lbrace (1,-1) \text{ among the first $k$ rows of $Z$} \rbrace \geq \# \lbrace (-1,1) \text{ among the first $k$ rows of $Z$} \rbrace$ for all $k\leq n$.
        \item[(iii)] $\# \lbrace (1,-1) \text{ in $Z$} \rbrace = \# \lbrace (-1,1) \text{ in $Z$} \rbrace$.
    \end{enumerate}
\end{lemma}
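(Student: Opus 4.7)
The plan is to prove the two implications separately; in both, the fact that $M$ and $M'$ share row and column sums (being members of a common $\BCT(r,c)$) does most of the combinatorial work.

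For the forward direction, assume $M'\leqsb M$. Every $L_2\to I_2$ interchange preserves both row and column sums while keeping entries in $\{0,1\}$, so $Z=M'-M$ has entries in $\{-1,0,1\}$ and each row of $Z$ sums to zero; since the table has only two columns, this immediately gives (i). Preservation of total column sums yields (iii). For (ii), I would observe the identity
\[
\#\{(1,-1)\text{ in first }k\text{ rows of }Z\}-\#\{(-1,1)\text{ in first }k\text{ rows of }Z\}=\sum_{i=1}^k z_{i,1}=\sigma(M')_{k,1}-\sigma(M)_{k,1},
\]
so that (ii) coincides with the first-column portion of $\Sigma_{M'}\geq\Sigma_{M}$, i.e.\ with the combinatorial Bruhat relation $M'\leqb M$ (the second-column portion being automatic from equal row sums). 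The implication $M'\leqsb M\Rightarrow M'\leqb M$ is the easy direction already recorded in Section~\ref{sect:The two combinatorial Bruhat orders are different}.

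For the backward direction I would induct on the number of nonzero rows of $Z$. If $Z=0$, there is nothing to prove. Otherwise let $i$ be the smallest index with $z_{i,\cdot}\neq (0,0)$; evaluating (ii) at $k=i$ forces $z_{i,\cdot}=(1,-1)$ (a $(-1,1)$ would give $0\geq 1$), so row $i$ of $M$ is $(0,1)$ and row $i$ of $M'$ is $(1,0)$. By (iii), together with the fact that no $(-1,1)$ rows precede $i$, there must exist some $j>i$ with $z_{j,\cdot}=(-1,1)$; pick $j$ minimal with this property. Then row $j$ of $M$ equals $(1,0)$, hence $M[\{i,j\},\{1,2\}]=L_2$, and I would perform the corresponding $L_2\to I_2$ interchange to produce an intermediate table $M_1$.

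The crux of the induction is to verify that $Z_1:=M'-M_1$ still satisfies (i)--(iii) while carrying two fewer nonzero rows. Conditions (i) and (iii) are immediate, since $Z_1$ agrees with $Z$ outside rows $i$ and $j$ and is zero there. The only delicate point is (ii) on the range $i\leq k<j$, where the count of $(1,-1)$ rows of $Z$ drops by one while the count of $(-1,1)$ rows is unchanged. This is exactly what the minimal choices of $i$ and $j$ are designed for: by those choices, no $(-1,1)$ row of $Z$ occurs in positions $\leq j-1$, so on that range $\#\{(-1,1)\text{ in first }k\text{ rows of }Z\}=0$ while $\#\{(1,-1)\text{ in first }k\}\geq 1$, leaving enough slack for the inequality to persist in $Z_1$. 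This bookkeeping around the minimality of $i$ and $j$ is the main (and essentially the only) obstacle; once it is settled, the inductive hypothesis applies to the pair $(M_1,M')$ and chains the interchanges into a witness of $M'\leqsb M$.
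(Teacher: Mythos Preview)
Your proof is correct and follows essentially the same strategy as the paper's: both the forward direction (recognizing that $Z$ is a sum of $\begin{psmallmatrix}1&-1\\-1&1\end{psmallmatrix}$-blocks and extracting (i)--(iii)) and the backward direction (pairing the $k$-th $(1,-1)$ row with the $k$-th $(-1,1)$ row to produce the interchanges) coincide, with your inductive presentation unwinding to exactly the pairing $(i_k,j_k)$ the paper writes down at once. Your derivation of (ii) via the implication $M'\leqsb M\Rightarrow M'\leqb M$ is a mild variation on the paper's direct argument from the sum form of $Z$, but the content is the same.
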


We invite the reader to test this result in the situation described in Figure \ref{fig:DualGr(2,4)}.

\begin{proof}

For a given matrix $A\in \Mat(2,2)$, let $A^{ij}\in\Mat(n,2)$ be the matrix all whose rows are $(0,0)$ except from the $i$-th and the $j$-th, which we set to be equal to the first and the second row of $A$, respectively. Note that the matrices $L_2$ and $I_2$ involved in the transition defining the secondary Bruhat order are related by 
\[
L_2+\begin{pmatrix} 1 & -1 \\ -1 & 1 \end{pmatrix} =I_2.
\]
As a consequence, if $M'\leqsb M $ then $M'=M+\sum_{k} + Z$, where
\begin{equation}
\label{eq: lemma 2-columns characterization}
Z = \sum_{k=1}^K \begin{pmatrix} 1 &-1 \\ -1 & 1 \end{pmatrix}^{i_kj_k}
\end{equation} 
for some $i_k, j_k\leq n$ and $K\in \N$. As a consequence, the rows of $Z$ are of the form $n_1(1,-1)+n_2(-1,1)$ for some $n_1,n_2\in \N$. But since the rows of $M$ and $M'$ are either $(0,0)$, $(1,0)$, $(0,1)$ or $(1,1)$, the condition $M'=M+\sum_{k}Z$ forces (i). Conditions (ii) and (iii) are then immediate consequences of equation \eqref{eq: lemma 2-columns characterization}.

Conversely, given a $Z$ satisfying (i)-(iii) such that  $M'=M+Z$, let $K$ be the number of $(1,-1)$-type (equivalently $(-1,1)$-type by (iii)) rows in $Z$. For $k=1,\dots, K$, let $i_k$ (resp. $j_k$) be the column where the $k$-th row of type $(1,-1)$ (resp. of type $(-1,1)$) appears. Then, $Z$ is of the form \eqref{eq: lemma 2-columns characterization} and each term in the summation corresponds to a $L_2\to I_2$ interchange, hence $M'\leqsb M$. 
\end{proof}

\begin{lemma}
\label{lm:attracting sets different torus actions projective variety}
Let $ A=A_1\times A_2$ be a torus acting on a complete variety $X$ and let $F,G$ be two connected components for the action of the subtorus $A_1\subset A$.
Let $\sigma: \Cs\to A$ be a generic cocharacter and denote by $\sigma_1:\Cs\to A_1$ the composition of $\sigma$ with the projection $A\to A_1$. Then $\overline{\Attr_{\sigma_1}(F)}\cap G\neq 0$ iff there exist $A$-fixed points $\tilde f\in F^{A}$, $\tilde g\in G^{A}$ such that $\overline{\Attr_{\sigma}(f)}\cap g\neq 0$.
\end{lemma}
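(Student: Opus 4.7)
The plan is to establish the decomposition
\[
\Attr_{\sigma_1}(F) \;=\; \bigsqcup_{\tilde f \in F^{A}} \Attr_{\sigma}(\tilde f),
\]
which reduces both implications of the lemma to simple manipulations of closures. Concretely, $(\Leftarrow)$ becomes the trivial inclusion $\overline{\Attr_\sigma(\tilde f)} \subseteq \overline{\Attr_{\sigma_1}(F)}$ for $\tilde f \in F^A$. For $(\Rightarrow)$, I would pick a point $p \in \overline{\Attr_{\sigma_1}(F)} \cap G$; since $F^A$ is finite (as we are in the bow variety setting) and closure commutes with finite unions, $p \in \overline{\Attr_{\sigma}(\tilde f)}$ for some $\tilde f \in F^A$. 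Then I set $\tilde g := \lim_{t\to 0} \sigma(t)\,p$. Because $p \in G \subseteq X^{A_1}$ we have $\sigma(t)p = \sigma_2(t)p$, so the whole $\sigma$-orbit of $p$ stays in $G$ and $\tilde g \in G$; by genericity of $\sigma$, the $\sigma$-fixed limit $\tilde g$ is actually $A$-fixed, i.e.\ $\tilde g \in G^A$. Finally, $\overline{\Attr_\sigma(\tilde f)}$ is closed and $\sigma$-invariant, so $\tilde g \in \overline{\Attr_\sigma(\tilde f)}$.

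The technical heart of the proof is the decomposition above, which amounts to the assertion that for every $x \in X$ the two limits $\lim_{t\to 0} \sigma(t)x$ and $\lim_{t\to 0} \sigma_1(t)x$ lie in the same connected component of $X^{A_1}$. To prove it, I would reuse the degeneration trick from Lemma \ref{lma:one dimensional orbits}. Write $\sigma = \sigma_1 \cdot \sigma_2$ where $\sigma_2$ is the composition of $\sigma$ with the projection $A \to A_2$, and consider the two-parameter morphism
\[
\Phi: \Cs \times \Cs \to X, \qquad (s,t) \mapsto \sigma_2(s)\,\sigma_1(t)\,x.
\]
Since $X$ is complete, $\Phi$ extends (by the rank-two torus lemma invoked in the proof of Lemma \ref{lma:one dimensional orbits}) to a morphism $\Pe^1\times\Pe^1 \to X$. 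Evaluating along the diagonal gives $\Phi(t,t) = \sigma(t)x$, hence $\Phi(0,0) = \lim_{t\to 0} \sigma(t)x$. On the other hand, $\Phi(s,0) = \sigma_2(s)\,y$ where $y := \lim_{t\to 0}\sigma_1(t)x \in X^{A_1}$; because $A_2$ is connected and preserves $X^{A_1}$, the image of $s \mapsto \sigma_2(s)y$ lies entirely in the connected component $F'$ of $X^{A_1}$ containing $y$, and so does its $s\to 0$ limit $\Phi(0,0)$. Thus both limits sit in $F'$, and by genericity of $\sigma$ the $\sigma$-limit $\Phi(0,0)$ is $A$-fixed, i.e.\ lies in $F'^{A}$. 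This gives the claimed decomposition.

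The main obstacle I anticipate is precisely the extension of $\Phi$ to $\Pe^1\times\Pe^1$ and the identification of the boundary values $\Phi(s,0)$ and $\Phi(0,0)$: one needs the rank-two torus lemma (completeness is essential, as illustrated by the subsequent counterexample in the paper showing failure on non-complete varieties). Everything else is a formal Bia\l ynicki-Birula style bookkeeping, together with the observation that, under a generic $\sigma$, $\sigma$-fixed loci coincide with $A$-fixed loci.
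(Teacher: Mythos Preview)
Your argument is correct and rests on the same key ingredient as the paper's proof---the rank-two torus lemma from the Appendix that provides the extension $\Pe^1\times\Pe^1\to X$ of a two-parameter orbit. The packaging, however, is genuinely different. You first establish the global Bia\l{}ynicki--Birula style decomposition $\Attr_{\sigma_1}(F)=\bigsqcup_{\tilde f\in F^{A}}\Attr_{\sigma}(\tilde f)$ (this is exactly what your $\Phi$-argument proves), and then pass to closures by commuting closure with the finite union over $F^{A}$. The paper instead works with a \emph{single} point $x$ whose $\sigma_1$-orbit closure runs from $F$ to $G$ and shows, via the same $\Pe^1\times\Pe^1$ map applied to that particular $x$, that the corners $\Gamma(0,0)$ and $\Gamma(\infty,\infty)$ land in $F^{A}$ and $G^{A}$; so the $\sigma$-orbit of the very same $x$ already witnesses the conclusion.

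What each buys: your route handles the passage from $\Attr_{\sigma_1}(F)$ to its closure in a transparent way, but at the cost of the extra hypothesis that $F^{A}$ is finite---not part of the lemma as stated, though satisfied in the application to flag varieties. The paper's single-orbit argument never decomposes $\Attr_{\sigma_1}(F)$ and so avoids that finiteness assumption, but it begins by rephrasing $\overline{\Attr_{\sigma_1}(F)}\cap G\neq\emptyset$ as the existence of a $\sigma_1$-orbit whose two limit points lie in $F$ and $G$ respectively, a step it treats as obvious.
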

\begin{proof}
The proof is analogous to Lemma \ref{lma:one dimensional orbits}. Assume first that $\overline{\Att{\sigma}(F)}\cap G\neq 0$, or, equivalently, that there exists a point $x\in X$ and an orbit $\Cs\ni t\mapsto \sigma_1(t)\cdot x$ such that $\lim_{t\to 0} \sigma_1(t)\cdot x\in F$ and $\lim_{t\to \infty} \sigma_1(t)\cdot x\in G$. The cocharacter $\sigma:\Cs\to A$ splits as 
\[
\begin{tikzcd}
    \Cs\arrow[r, "\sigma"]\arrow[d, , swap, "\Delta"] & A_1\times A_2\\
    \Cs\times \Cs\arrow[ur, swap, "\sigma_1\times \sigma_2"]
\end{tikzcd}
\]
Since $\sigma$ is generic for the action of $A$, the cocharacters $\sigma_1$ and $\sigma_2$ are generic for the actions of $A_1$ and $A_2$, respectively. Consider the action of $\Cs\times \Cs$ on $X$ induced by $\sigma_1\times \sigma_2$ and the map 
\begin{equation}
    \label{map in proof attracting set different torus actions}
    \Gamma: \Pe^1\times \Pe^1\xrightarrow[]{p^{-1}} \overline{\Graph(x)}\to X
\end{equation}
defined via Lemma \ref{lemma fixed  points rank two torus}. The closure of the orbit $\Cs\ni t\mapsto \sigma_1(t)\cdot x$ is contained in the image of \eqref{map in proof attracting set different torus actions}, and in particular the limit points $\lim_{t\to 0} \sigma_1(t)\cdot x$ and $\lim_{t\to \infty} \sigma_1(t)\cdot x$ are contained in the images of $\lbrace 0 \rbrace\times \Pe^1$ and $\lbrace \infty \rbrace\times \Pe^1$ respectively. Since the cocharacter $\sigma_1$ is generic, by the second point of Lemma \ref{lemma fixed  points rank two torus} these limit points are fixed by  $A_1$. Then, by connectedness, it follows that 
\begin{equation}
    \label{inclusion in proof attracting set different torus actions}
    \Gamma(\lbrace 0 \rbrace\times \Pe^1)\subseteq F\qquad \Gamma(\lbrace \infty \rbrace\times \Pe^1) \subseteq G.
\end{equation}
Now consider the orbit $\sigma(t)\cdot x$. It can be seen as the image of the composition 
\[
\Cs\xrightarrow[]{\Delta} \Cs\times \Cs\to \Pe^1\times \Pe^1\xrightarrow[]{p^{-1}} \overline{\Graph(x)}\to X
\]
Its limit point $\lim_{t\to 0}\sigma(t)\cdot x$ coincides with $\Gamma(0\times 0)$, hence \eqref{inclusion in proof attracting set different torus actions} implies that $f:=\lim_{t\to 0}\sigma(t)\cdot x\in F$. By the third point of Lemma \ref{lemma fixed  points rank two torus}, the point $f$ is fixed by the action of $\Cs\times \Cs$ induced by $\sigma= \sigma_1\times \sigma_2$. Since the cocharacters are generic, the limit point $f$ is even fixed by the whole $ A=A_1\times A_2$. Analogously, one can argue that $g:=\lim_{t\to \infty}\sigma(t)\cdot x$ is equal to $\Gamma(\infty\times \infty)$, belongs to $G$, and is $ A$-fixed. Overall, we have shown that $\lim_{t\to 0}\sigma(t)\cdot x=f\in F^A$ and $\lim_{t\to \infty}\sigma(t)\cdot x=g\in G^A$, which implies that $\overline{\Att\sigma(f)}\cap g\neq 0$.

The opposite direction is analogous. In this case one assumes the existence of an orbit $\sigma(t)\cdot x$ with limit points at $f$ and $g$ and shows that the orbit $\sigma_1(t)\cdot x$ connects $F$ to $G$. Details are left to the reader. As before, the closure of $\sigma(t)\cdot x$ corresponds to the diagonal $\Delta: \Pe^1\to \Pe^1\times \Pe^1$, while $\sigma_1(t)\cdot x$ corresponds to $\Pe^1\times \lbrace 1\rbrace \hookrightarrow\Pe^1\times \Cs\subset \Pe^1\times \Pe^1$.
\end{proof}

We are now ready to give an alternative proof of our main theorem.

\begin{proof}[Second proof of Theorem \ref{thm:main}]
    The $\lbrace \text{combinatorial}\rbrace \implies \lbrace\text{geometric}\rbrace$ direction is the easiest part of the proof. Indeed, an attracting invariant curve connecting two fixed points such that $M\lessdot_{\hat{B}}M'$ can be explicitly constructed following \cite[Section 3.4.1]{Shou}. Therefore, we only prove the opposite direction.
    
    For convenience, we denote by $M_X$ a BCT table defining a fixed point in the bow variety $X$. It suffices to show that if there exists an invariant curve $\gamma: \Pe^1\to X$ with attractive tangent weight $\chi\in \text{Char}(\At)$ at $0$ and such that $\gamma(0)=M_X$ and $\gamma(\infty)=M_X'$, then $M'_X<M_X$.
    
    First, we reduce ourselves to the case where there are only two D5 branes. This corresponds to the case where the BCT table has only two columns. By \cite[Section 4.6]{RimanyiShou}, the tangent weight $\chi$ of $\gamma$ at $0$ must be of the form $a_i/a_j$ for some $i\neq j\in \lbrace 1,\dots , n\rbrace$. Let $B\subset \At$ be the sub-torus given by the equation $a_i=a_j$. Then, the curve $\gamma$ is contained in some connected component $Y\subset X^B$. Moreover, its endpoints $M_X$ and $M'_X$ correspond to two $\At/B$-fixed points in $Y$. By \cite[Theorem 3.1]{BottaRimanyi}, every fixed component in $X^B$ is isomorphic to a bow variety with exactly two branes (these two branes correspond to the $i$-th and $j$-th D5 branes in $X$). Let $M_Y$ and $M_Y'$ be the BCT tables describing the fixed points $\gamma(0), \gamma(\infty)\in Y^{\At/B}$. The matrix $M_Y$ (resp. $M'_Y$) is simply obtained from $M_X$ (resp. $M'_X$) by erasing all but the $i$-th and $j$-th columns. In particular, if we show that $M'_Y<_{\hat B} M_Y$, then $M'_X<_{\hat B} M_X$ follows. Moreover, since by construction $\gamma$ is contained in $Y\subset X$, it follows that $M'_Y < M_Y$. Overall, we have shown that 
     \[
     M'_X <_G M_X \implies M'_Y <_G M_Y\qquad M'_Y\leqsb M_Y \implies M'_X\leqsb M_X.
     \]
     Hence, it suffices to prove that $M'_Y <_G M_Y \implies M'_Y<_{\hat B} M_Y$. In words, we have reduced ourselves to the case of a bow variety with only two D5 branes (equivalently, to the case of BCTs with only two columns). This concludes the first step of the proof. 
     
     In the second step, we exploit the results of the previous section to constrain the BCT table of a fixed point $M'_Y$ satisfying $M'_Y <_G M_Y$. To ease the notation, we drop the subscripts in the fixed points since there is no more risk of confusion. By Corollary \ref{eq:corollary embedding in flags}, there is an $\At$-equivariant embedding $Y\hookrightarrow \TsFl$, where $\Fl$ is a partial flag variety. Let $F\subseteq \TsFl^{\At}$ (resp. $G\subseteq \TsFl^{\At}$) be the $\At$-fixed component containing $M$ (resp. $M'$). Let $F_0$ (resp. $G_0$) be the intersection of $F$ (resp. $G$) with the zero section $\Fl\subset \TsFl$. They are both isomorphic to the fiber products of two full flag varieties. Since by hypothesis $M$ and $M'$ are connected by a chain of invariant curves, the same is true for $F$ and $G$ and hence for $F_0$ and $G_0$ via the projection $\TsFl\to \Fl$. Therefore, we deduce that $\overline{\Attr_{\sigma}(F_0)} \cap G_0\neq \emptyset$. Let $\widetilde \At \supset \At$ be the maximal torus acting on $\Fl$ from the framing. 
     
     Choose a splitting $\widetilde\At=\At\times \widetilde{\At}/\At $ and a cocharacter $\lambda_{\tilde\sigma}:\Cs\to \widetilde{\At}$ such that its projection to $\At$ is $\lambda_\sigma:\Cs\to \At$. By Lemma \ref{lm:attracting sets different torus actions projective variety} below, it follows that that there exist $\widetilde \At$-fixed points $\widetilde M,\widetilde M'\in \Fl^{\widetilde \At}$ such that $\overline{\Attr_{\tilde\sigma}(\widetilde M)}\cap \widetilde M'\neq \emptyset$. Notice that $\widetilde M$ (resp. $\widetilde M'$) is a maximal resolution of $M$ (resp.$M'$) in the sense of Definition \ref{def:maximal resolutions}. Since  for flag varieties either combinatorial Bruhat orders coincide with the geometric order—see Remark \ref{GenoaCFC}—we deduce that $\widetilde M'$ is obtained from $\widetilde M$ by a sequence of $L_2 \to I_2$ interchanges.
     If one of these interchanges occurs among the columns resolving a single column of $\widetilde M$, they leave $M$ unchanged. If instead they involve columns of $\widetilde M$ resolving both columns of $M$, they have the effect of adding to $M$ a matrix of the form
     \[
     \begin{pmatrix}
         1 &-1 \\
         -1 & 1
     \end{pmatrix}.
     \]
     Hence, we deduce that $M'=M+Z$, where
     \[
     Z = \sum_{k} \begin{pmatrix} 1 &-1 \\ -1 & 1 \end{pmatrix}^{i_kj_k}
     \]
    for some indices $1\leq i_k, j_k\leq m$.
    But then the same argument used in the proof of Lemma \ref{lma: 2-columns characterization} shows that the assumptions of the latter are satisfied. Hence the $M<_{\hat B} M'$ and the proof follows.
\end{proof}



\appendix

\section{}

In this appendix, we record some results about orbits of torus actions on complete algebraic varieties. In the first lemma, which is classical, we recall the characterization of $\Cs$-oribits  closures in terms of morphisms from $\Pe^1$. Then, we extend the result to rank two tori. Although this second lemma is also probably well known, we could not find a reference for it. Hence, we provide a proof.

\begin{lemma}[Lemma 2.4.1 in \cite{Chriss_book}] 
\label{lemma curves in varieties}
Let X be a complete variety and let $\Cs$ be one dimensional torus acting on $X$. Fix $x\in X$ and let $\overline{\Graph(x)}$ be the closure in $\Pe^1 \times X$ of $\Graph(x)= \{ (z, z\cdot x )\in \Cs\times X \; | \;z\in \Cs\}$.
Then
\begin{enumerate}
    \item The projection $p: \overline{\Graph(x)}\to \Pe^1$ is an isomoprhism.
    \item The image of the points $0,\infty\in \Pe^1$ under the composition $\Pe^1\xrightarrow[]{p^{-1}} \overline{\Graph(x)}\to X$  are fixed points for the $\Cs$-action.
\end{enumerate}

\end{lemma}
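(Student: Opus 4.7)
The backbone of the proof is the extension principle: since $X$ is complete and $\Pe^1$ is a smooth curve, any morphism from a dense open of $\Pe^1$ to $X$ extends uniquely to a morphism defined on all of $\Pe^1$ (this is the standard consequence of the valuative criterion of properness for discrete valuations rings centered at points of $\Pe^1$). I would apply this to the orbit map $\phi_x: \Cs \to X$, $z\mapsto z\cdot x$, to obtain a morphism $\gamma: \Pe^1 \to X$ with $\gamma|_{\Cs}=\phi_x$, and then interpret both claims in terms of $\gamma$.

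For claim (1), I would identify $\overline{\Graph(x)}$ with $\Graph(\gamma)\subset \Pe^1\times X$. The graph $\Graph(\gamma)$ is closed (being the preimage of the diagonal $\Delta_X\subset X\times X$ under $(\gamma\circ \mathrm{pr}_1,\mathrm{pr}_2)$, and $X$ is separated), and it contains $\Graph(\phi_x)=\Graph(x)$ as an open dense subvariety; hence $\overline{\Graph(x)}=\Graph(\gamma)$. But the projection $\mathrm{pr}_1$ restricted to the graph of any morphism is an isomorphism with inverse $z\mapsto (z,\gamma(z))$, so $p:\overline{\Graph(x)}\to \Pe^1$ is an isomorphism as required.

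For claim (2), the image of $0\in \Pe^1$ under $\Pe^1 \xrightarrow{p^{-1}} \overline{\Graph(x)} \to X$ is $\gamma(0)$, and similarly for $\infty$. To see that $\gamma(0)$ is $\Cs$-fixed, I would fix $t\in \Cs$ and consider the two morphisms $\Pe^1 \to X$ given by $z\mapsto t\cdot \gamma(z)$ and $z\mapsto \gamma(tz)$. On the dense open $\Cs$ both equal $z\mapsto t\cdot z\cdot x=(tz)\cdot x$, so by separatedness of $X$ they agree on all of $\Pe^1$. Evaluating at $0$ and using $t\cdot 0=0$ in $\Pe^1$ gives $t\cdot \gamma(0)=\gamma(0)$; since $t$ was arbitrary, $\gamma(0)\in X^{\Cs}$. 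The same argument at $\infty$ gives $\gamma(\infty)\in X^{\Cs}$.

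The only nontrivial ingredient is the extension statement $\phi_x \leadsto \gamma$, which is a classical fact about rational maps from a smooth curve to a complete variety; everything else is formal manipulation of graphs and the universal property of separatedness. I do not anticipate a substantive obstacle once the extension is in hand, and in particular the equivariance identity $\gamma\circ m_t=t\cdot \gamma$ (where $m_t$ is multiplication by $t$ on $\Pe^1$) immediately yields both fixedness statements.
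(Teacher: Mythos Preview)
Your argument is correct and is the standard proof of this classical fact. The paper itself does not supply a proof: it records the lemma as Lemma~2.4.1 of Chriss--Ginzburg and only proves the rank-two extension (Lemma~\ref{lemma fixed  points rank two torus}), so there is nothing to compare against beyond noting that your extension-of-the-orbit-map approach is exactly the proof one finds in that reference.
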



    

With this well known result in mind, we extend the previous result to rank two tori. In principle, we could admit an arbitrary rank, but this level of generality suffices for our scopes.
\begin{lemma}
\label{lemma fixed  points rank two torus}
    Let X be a complete variety and let $\Cs\times \Cs$ act on $X$ via $m: \Cs\times \Cs\times X\to X$. Fix $x\in X$ and let $\overline{\Graph(x)}$ be the closure in $\Pe^1\times \Pe^1 \times X$ of
    \[
    \{ (z,w,m(z,w,x))\in \Cs\times \Cs\times X \; |\; (z,w)\in \Cs\times \Cs\}.
    \]
    Then
    \begin{enumerate}
        \item The projection $p: \overline{\Graph(x)}\to \Pe^1\times \Pe^1$ is an isomorphism.
        \item The images of the curves $\lbrace 0\rbrace \times \Pe^1$ and $\lbrace \infty \rbrace \times \Pe^1$ under the composition $\Pe^1\times \Pe^1\xrightarrow[]{p^{-1}} \overline{\Graph(x)}\to X$ are fixed by the action of the one dimensional torus $\Cs\times \lbrace 1\rbrace \subset \Cs\times \Cs $. Similarly, the images of $\Pe^1\times \lbrace 0\rbrace $ and $\Pe^1\times \lbrace \infty \rbrace$ are fixed under the action of the one dimensional torus $\lbrace 1\rbrace \times \Cs \subset \Cs\times \Cs $. 
        \item The images of the ``corners'' $0\times 0, 0\times \infty, \infty\times 0, \infty\times \infty \in \Pe^1\times \Pe^1$ are fixed under the action of $\Cs\times \Cs$.
    \end{enumerate}
\end{lemma}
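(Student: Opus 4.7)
The plan is to mimic the proof of Lemma \ref{lemma curves in varieties}, extending it to the two-parameter setting, and to derive Parts (2) and (3) formally from the $\Cs\times \Cs$-equivariance established in Part (1).

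For Part (1), the three basic properties of the projection $p:\overline{\Graph(x)}\to \Pe^1\times \Pe^1$ are immediate: it is proper because $X$ is complete (so the ambient projection $\Pe^1\times\Pe^1\times X\to \Pe^1\times\Pe^1$ is proper), it is surjective because its image is closed and contains the dense orbit $\Cs\times\Cs$, and it is birational because it is injective on the locus above $\Cs\times\Cs$. To upgrade to an isomorphism, I would first iterate Lemma \ref{lemma curves in varieties} along slices: for fixed $w_0\in\Cs$, the intersection $\overline{\Graph(x)}\cap (\Pe^1\times\{w_0\}\times X)$ equals the graph closure of the $\Cs\times\{1\}$-orbit of $m(1,w_0,x)$, which by Lemma \ref{lemma curves in varieties} projects isomorphically onto $\Pe^1\times\{w_0\}$; the same argument applied to vertical slices shows that $p$ is an isomorphism over the open set $(\Pe^1\times\Cs)\cup(\Cs\times\Pe^1)$.

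The main obstacle is then to control $p$ at the four corners $(0,0), (0,\infty), (\infty,0), (\infty,\infty)$. Since $p$ is proper and birational onto a smooth surface, its exceptional locus is a (possibly empty) union of $\Cs\times\Cs$-invariant divisors in $\overline{\Graph(x)}$ contracted to these corners. To rule these out, I plan to use Sumihiro's theorem to find, around each candidate torus-fixed limit point in $X$, an equivariant affine chart on which the $\Cs\times\Cs$-action is linearized by an explicit weight decomposition. A direct computation of the graph closure inside such a chart verifies that the iterated limits of $(z,w)\mapsto m(z,w,x)$ in the two parameters agree, so no exceptional divisor appears, and $p$ is an isomorphism. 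The delicate point here is precisely that a priori the limits of $m(z,w,x)$ along different one-parameter subgroups approaching a corner could disagree; the linearization reduces this to a monomial calculation where the compatibility is manifest.

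Parts (2) and (3) then follow formally from the equivariance of the morphism $\Gamma:=F\circ p^{-1}:\Pe^1\times\Pe^1\to X$, with $F$ the projection to $X$. The subvariety $\{0\}\times\Pe^1\subset \Pe^1\times\Pe^1$ is pointwise fixed by $\Cs\times\{1\}$ (because $0\in\Pe^1$ is $\Cs$-fixed), and equivariance forces its image in $X$ to be pointwise fixed by the same subtorus; the same reasoning applies to $\{\infty\}\times\Pe^1$, $\Pe^1\times\{0\}$, and $\Pe^1\times\{\infty\}$, giving Part (2). For Part (3), each of the four corners of $\Pe^1\times\Pe^1$ is pointwise $\Cs\times\Cs$-fixed, so by equivariance its image under $\Gamma$ is $\Cs\times\Cs$-fixed in $X$.
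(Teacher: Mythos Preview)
Your reduction to the open locus $(\Pe^1\times\Cs)\cup(\Cs\times\Pe^1)$ by slicing with Lemma~\ref{lemma curves in varieties}, and your derivation of Parts~(2) and~(3) from equivariance of $\Gamma$, are correct and coincide with the paper's argument.

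The divergence is at the four corners, and here your proposal has a gap. You invoke Sumihiro's theorem to linearize the $\Cs\times\Cs$-action on $X$ near a candidate limit point, but Sumihiro requires $X$ to be \emph{normal}, and the lemma only assumes $X$ complete. Even granting normality, the phrase ``a direct computation of the graph closure inside such a chart'' hides a real difficulty: the orbit $(z,w)\mapsto m(z,w,x)$ need not remain in any single invariant affine chart as $(z,w)$ approaches a corner, so a coordinate computation of the closure is not immediately available. Your exceptional-divisor language also presupposes some regularity of $\overline{\Graph(x)}$ (e.g.\ to know the exceptional locus is divisorial) that has not been established.

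The paper sidesteps all of this with a cleaner device: having handled $(\Pe^1\times\Cs)\cup(\Cs\times\Pe^1)$ via horizontal and vertical slices, it reaches the corners by pulling $\overline{\Graph(x)}$ back along the \emph{diagonal} $\Delta:\Pe^1\hookrightarrow\Pe^1\times\Pe^1$ and the \emph{antidiagonal} $\nabla:z\mapsto(z,z^{-1})$. These pullbacks are identified with the graph closures of the one-parameter orbits $z\mapsto m(z,z,x)$ and $z\mapsto m(z,z^{-1},x)$, to which Lemma~\ref{lemma curves in varieties} applies once more. Since $\Delta$ hits $(0,0),(\infty,\infty)$ and $\nabla$ hits $(0,\infty),(\infty,0)$, this yields bijectivity of $p$ at the corners as well. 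One then concludes via Zariski's Main Theorem: a proper bijective morphism from a connected variety to the normal variety $\Pe^1\times\Pe^1$ is an isomorphism. The only normality used is that of the \emph{target} $\Pe^1\times\Pe^1$, not of $X$.
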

\begin{proof}
Since $ \overline{\Graph(x)}$ is connected and $ \Pe^1\times \Pe^1$ is normal, it suffices to show that $\pi: \overline{\Graph(x)}\to \Pe^1\times \Pe^1$ is bijective on $\C$-points. The proof then essentially follows by iteration of Lemma \ref{lemma curves in varieties}. 
    Fix $z\in \Cs \subset \Pe^1$ and consider the following pullback diagram
    \[
    \begin{tikzcd}
        \overline{\Graph(x)}_z \arrow[d, hookrightarrow] \arrow[r, hookrightarrow] & \overline{\Graph(x)} \arrow[d, hookrightarrow]\\
         \lbrace z \rbrace\times \Pe^1\times X \arrow[r, hookrightarrow] & \Pe^1\times \Pe^1 \times X
    \end{tikzcd}
    \]
    By construction, $\overline{\Graph(x)}_z $ is closed in $\lbrace z \rbrace\times \Pe^1\times X$. Indeed, it coincides with the closure of the orbit $\{ (w, m(z,w, x))\in \Cs\times X \; | \; w\in \Cs\}$. By the previous Lemma \ref{lemma curves in varieties}, we deduce that the composition $\overline{\Graph(x)}_z\to \lbrace z \rbrace\times \Pe^1\times X\to  \lbrace z \rbrace\times \Pe^1$ is an isomorphism for all $z\in \Cs$. As a consequence, it follows that the restriction of $p$ on the preimage of $\Cs\times \Pe^1$ is a bijection. Swapping the order of the two $\Cs$, one similarly shows that the restriction of $p$ to the preimage of $\Pe^1\times \Cs$ is also a bijection. Altogether, we deduce that the restriction of $p$ to $\Pe^1\times \Pe^1\setminus \lbrace 0\times 0, 0\times \infty,\infty\times 0, \infty\times \infty\rbrace$ is bijective. 
    Now consider the pullbacks
    \[
    \begin{tikzcd}
        \overline{\Graph(x)}_\Delta \arrow[d, hookrightarrow] \arrow[r, hookrightarrow] & \overline{\Graph(x)} \arrow[d, hookrightarrow]\\
         \Pe^1\times X \arrow[r, hookrightarrow, "\Delta\times id"] & \Pe^1\times \Pe^1 \times X
    \end{tikzcd}
    \qquad 
    \begin{tikzcd}
        \overline{\Graph(x)}_\nabla \arrow[d, hookrightarrow] \arrow[r, hookrightarrow] & \overline{\Graph(x)} \arrow[d, hookrightarrow]\\
         \Pe^1\times X \arrow[r, hookrightarrow, "\nabla\times id"] & \Pe^1\times \Pe^1 \times X
    \end{tikzcd}
    \]
    along the diagonal $\Delta :\Pe^1\to \Pe^1\times \Pe^1$ and the antidiagonal $\nabla: \Pe^1\to \Pe^1\times \Pe^1$ respectively (here $\nabla$ is the composition of $\Delta$ with the inversion $w\to 1/w$ on the second factor). The variety $ \overline{\Graph(x)}_\Delta$ corresponds to the image of the orbit $\{(z,z, m(z,z, x))\in \Cs\times X\; | \;z\in \Cs\}$. Likewise, the variety $ \overline{\Graph(x)}_\nabla$ corresponds to the image of the orbit $\{(z,1/z, m(z,1/z, x))\in \Cs\times X\; | \; z\in \Cs\}$. Again, by Lemma \ref{lemma curves in varieties} it follows that the restrictions of $p$ to the preimages of $\Delta$ and $\nabla$ are bijections. Since these restrictions clearly agree on their common intersection with the restriction of $p$ on the preimage of $\Pe\times \Pe\setminus \lbrace 0\times 0, 0\times \infty,\infty\times 0, \infty\times \infty\rbrace$, we conclude that $p: \overline{\Graph(x)}\to \Pe^1\times \Pe^1$ is bijective, as desired. This completes the proof of the first point. 
    
    We now prove the second point. By construction, the image of $\lbrace 0\rbrace \times \Cs$ is fixed by $\Cs\times \lbrace 1 \rbrace $. Since fixed loci are closed, also the preimage of $\lbrace 0\rbrace \times \Pe^1$ is closed. This proves the claim for the first curve. The other cases are analog.

    As for the third point, simply notice that by point (2) each of the ``diagonal'' points $0\times 0, 0\times \infty, \infty\times 0, \infty\times \infty \in \Pe^1\times \Pe^1$ lie at the intersection of two curves. One of these curves is fixed by $\Cs\times \lbrace 1 \rbrace $, while the other is fixed by $\lbrace 1 \rbrace\times \Cs $, hence their intersection if fixed by the whole torus $\Cs\times\Cs$.

\end{proof}


\bibliographystyle{amsplain}
\bibliography{BruhatRefs}

\end{document}